\newcommand{\Irr}{\operatorname{Irr}}
\newcommand{\Sp}{\operatorname{Sp}}
\newcommand{\LR}{\operatorname{LR}}
\newcommand{\sgn}{\operatorname{sgn}}
\newtheorem{lemma}{Lemma}[section]
\newtheorem{theorem}{Theorem}
\newtheorem{prb}{Problem}
\title[On eigenvalues of permutations in representations of $S_n$ and $A_n$]{On eigenvalues of permutations in irreducible representations of symmetric and alternating groups}
\author{Alexey Staroletov}\thanks{The work was supported by the Russian Science Foundation, project 24-11-00119.}
\date{}
\begin{document}

\newcommand{\Address}{
{
\bigskip\noindent
\footnotesize
Alexey~Staroletov, \textsc{Sobolev Institute of Mathematics, Novosibirsk, Russia;}\\\nopagebreak
\textit{E-mail address: } \texttt{staroletov@math.nsc.ru}
\\\nopagebreak
}}
\begin{abstract}
Denote the symmetric group of degree $n$ by $S_n$. 
Let $\rho$ be an irreducible representation of $S_n$ over the field of complex numbers and $\sigma\in S_n$. 
In this paper, we describe the set of eigenvalues of $\rho(\sigma)$. Based on this result, we also obtain a description in the case of alternating groups.
\end{abstract}

\maketitle

{\bf Keywords:} symmetric group, alternating group, 
partition, Young diagram, Littlewood--Richardson rule, 
minimal polynomial

{\bf MSC classes:} 20C30; 20C15; 05E10 

\section{Introduction}

Let $G$ be a finite group and $\mathbb{F}$ an algebraically closed  field. 
Suppose that $\rho$ is an irreducible representation of $G$ over $\mathbb{F}$. 
If $g\in G$, then the natural question is to describe the set of eigenvalues of $\rho(g)$. Denote by $\deg(\rho(g))$ the degree of the minimal polynomial of the matrix $\rho(g)$ and by $o(g)$ the order of $g$ modulo $Z(G)$.
The general problem was formulated in \cite{TZ08} as follows.
\begin{prb}\label{p:1} Determine all possible values for $\deg(\rho(g))$, and if possible, all triples
$(G, \rho, g)$ with $deg(\rho(g))<o(g)$, in the first instance under the condition that $o(g)$
is a $p$-power.
\end{prb}

There have been many papers devoted to this problem. The greatest progress has been achieved in the case of quasi-simple and almost simple groups, where elements of prime power orders are considered (see \cite{TZ22}, \cite{Z08} and references therein).

In this article, we are primarily interested in the symmetric and alternating groups of degree $n$, which are denoted by $S_n$ and $A_n$, respectively. The minimal polynomials of prime order elements of $A_n$ and $S_n$ in the ordinary or projective representations were found in \cite{Zal96}.
For the algebraically closed fields of positive characteristic $p$, Kleshchev and Zalesski \cite{KleshZal04} described the minimal polynomials of order $p$ elements in the irreducible representations of covering groups of $A_n$.

There are several results that consider not only the elements of prime power order.   The minimal polynomials of powers of cycles were found independently in \cite{Sw18} and \cite{YS21}.
Recently, the minimal polynomials of permutations of $A_n$ and $S_n$ such that in the cycle type the length of each cycle divides the length of the largest cycle have been described~\cite{V25}. In particular, this set of permutations contains all elements of prime power order.

In~\cite{ST89}, Stembridge gave combinatorial description of eigenvalues for permutations in every 
ordinary irreducible representation of $S_n$
in terms of so-called $\mu$-indices of standard tableaux. 
Later, a new approach to this result was presented by 
J{\"o}llenbeck and Schocker in~\cite{JS2000}. In a recent paper~\cite{PV-alt}, it is shown how Stembridge's description of eigenvalues can be extended to obtain a description of eigenvalues in the case of alternating groups.
In this paper, we describe eigenvalues in terms of cycle lengths.
\begin{theorem}\label{t:main}
Let $n\geqslant2$ be an integer.
Suppose that $\sigma\in S_n$ is a permutation with cycle type $\mu$ and $\rho:S_n\rightarrow GL(V)$ is an irreducible representation of $S_n$ over $\mathbb{C}$
which corresponds to a partition $\lambda$, where $\lambda\neq(n), (1^n)$. Denote the minimal polynomial of $\rho(\sigma)$ by $p_\lambda^\mu(x)$ and the set of roots of $p_\lambda^\mu(x)$ by 
$\Sp_\lambda(\mu)$. Then $p_\lambda^\mu(x)\neq x^{o(\sigma)}-1$ if and only if one of the following statements holds.
\begin{enumerate}[$(i)$]
\item There are no $t=n-\lambda_1$ elements in $\mu$
whose least common multiple equals $o(\sigma)$.
In this case, we have $\Sp_\lambda(\mu)=\{
\eta_{1}\cdots\eta_{t}~|~\eta_j^{\mu_{i_j}}=1, \text{ where } 
1\leqslant j\leqslant t\text{ and }\mu_{i_j}\in\mu\}$.
\item 
There are no $t=n-\lambda_1'$ elements in $\mu$
whose least common multiple equals $o(\sigma)$.
In this case, we have
$\Sp_\lambda(\mu)=\{
\sgn(\sigma)\eta_{1}\cdots\eta_{t}~|~\eta_j^{\mu_{i_j}}=1, \text{ where } 
1\leqslant j\leqslant t\text{ and }\mu_{i_j}\in\mu\}$.
\item $n\geqslant3$, $\lambda=(n-1,1)$,  $\mu=(n)$, $p_\lambda^\mu(x)=\frac{x^n-1}{x-1}$.
\item $n\geqslant4$, $\lambda=(2,1^{n-2})$, $\mu=(n)$, $p_\lambda^\mu(x)=\frac{x^n-1}{x+(-1)^n}$.
\item
$n\geqslant5$ is odd, $\lambda=(2,2,1^{n-4})$, $\mu=(n-2,2)$, $p_\lambda^\mu(x)=\frac{x^{2n-4}-1}{x-1}$.
\item
$n\geqslant5$ is odd, $\lambda=(n-2,2)$, $\mu=(n-2,2)$, $p_\lambda^\mu(x)=\frac{x^{2n-4}-1}{x+1}$.
\item $\lambda=(2,2)$, $(\mu,p_\lambda^\mu)\in\{((4), x^2-1), ((3,1), x^2+x+1), ((2,2), x-1)\}$;
\item $\lambda=(3^2)$, $\mu=(6)$, $p_\lambda^\mu(x)=\frac{x^6-1}{x^2+x+1}$.
\item $\lambda=(2^3)$, $\mu=(6)$, $p_\lambda^\mu(x)=\frac{x^6-1}{x^2-x+1}$.
\item $(\lambda,\mu)\in\{((2^3), (3,2,1)), ((4^2),(5,3)), ((2^4),(5,3)), ((2^5), (5,3,2))\}$
and $p_\lambda^\mu(x)=\frac{x^{o(\sigma)}-1}{x-1}$.
\item $(\lambda,\mu)\in\{((3^2), (3,2,1)), ((5^2), (5,3,2))\}$
and $p_\lambda^\mu(x)=\frac{x^{o(\sigma)}-1}{x+1}$.
\end{enumerate}
\end{theorem}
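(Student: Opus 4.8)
My plan is to reduce the statement to a question about which $o(\sigma)$-th roots of unity occur as eigenvalues, and then to control these eigenvalues by restricting $\rho$ to a Young subgroup and applying the Littlewood--Richardson rule. Since $p_\lambda^\mu(x)$ divides $x^{o(\sigma)}-1$ and its roots are exactly the distinct eigenvalues of $\rho(\sigma)$, we have $p_\lambda^\mu(x)=x^{o(\sigma)}-1$ if and only if every $o(\sigma)$-th root of unity is an eigenvalue; a root $\zeta$ occurs precisely when $\sum_{k=0}^{o(\sigma)-1}\chi_\lambda(\sigma^k)\bar\zeta^{\,k}\neq 0$. I would first record the symmetry $\rho_{\lambda'}=\rho_\lambda\otimes\sgn$, so that $\rho_{\lambda'}(\sigma)=\sgn(\sigma)\rho_\lambda(\sigma)$ and hence $\Sp_{\lambda'}(\mu)=\sgn(\sigma)\,\Sp_\lambda(\mu)$. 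This pairs $(i)\leftrightarrow(ii)$, $(iii)\leftrightarrow(iv)$, $(v)\leftrightarrow(vi)$, $(viii)\leftrightarrow(ix)$, and the two shapes inside $(x)$ and $(xi)$; it lets me treat each dual pair once, working with whichever of $t=n-\lambda_1$, $t'=n-\lambda_1'$ is smaller.

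The main tool is to embed $\sigma$ into the Young subgroup $Y=S_{\mu_1}\times\cdots\times S_{\mu_r}$ as a product $c_1\cdots c_r$ of single cycles and to decompose, by the iterated Littlewood--Richardson rule, $\rho_\lambda|_Y=\bigoplus_{\vec\nu}c^\lambda_{\vec\nu}\,\rho_{\nu^{(1)}}\boxtimes\cdots\boxtimes\rho_{\nu^{(r)}}$ with $\nu^{(i)}\vdash\mu_i$. Because eigenvalues multiply across tensor factors,
\[
\Sp_\lambda(\mu)=\bigcup_{\vec\nu:\,c^\lambda_{\vec\nu}>0}\Bigl\{\textstyle\prod_{i=1}^r\epsilon_i:\ \epsilon_i\in\Sp_{\nu^{(i)}}((\mu_i))\Bigr\}.
\]
Each factor reduces to the \emph{single-cycle} problem, where $\Sp_\nu((m))$ consists of $m$-th roots of unity and is governed by the major-index/fake-degree description of \cite{ST89} (equivalently, by the minimal polynomials of powers of cycles in \cite{Sw18,YS21}); in particular an $m$-cycle in $\rho_{(m)}$ contributes $\{1\}$, in $\rho_{(m-1,1)}$ all $m$-th roots except $1$, and in a hook $\wedge^k$ of the standard module all products of $k$ distinct nontrivial $m$-th roots. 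The Littlewood--Richardson inequality $\lambda_1\le\sum_i(\nu^{(i)})_1$ forces $\sum_i\bigl(\mu_i-(\nu^{(i)})_1\bigr)\le n-\lambda_1=t$ (the hypotheses $\lambda\neq(n),(1^n)$ guarantee $t,t'\ge 1$), so across all factors at most $t$ of the $\epsilon_i$ are nontrivial. Padding with $1$'s yields the containment $\Sp_\lambda(\mu)\subseteq\{\eta_1\cdots\eta_t:\eta_j^{\mu_{i_j}}=1\}$, and dually $\Sp_\lambda(\mu)\subseteq\sgn(\sigma)\{\eta_1\cdots\eta_{t'}:\eta_j^{\mu_{i_j}}=1\}$.

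From here the argument splits. If no $t$ cycle lengths of $\mu$ have least common multiple $o(\sigma)$, then for every admissible selection the involved lengths generate a proper subgroup of the $o(\sigma)$-th roots, so the product set above omits the primitive $o(\sigma)$-th roots and $p_\lambda^\mu(x)\neq x^{o(\sigma)}-1$; a matching lower bound, obtained by choosing the $\nu^{(i)}$ to be hooks (for which $c^\lambda_{\vec\nu}>0$ as soon as the off-first-row budget $\le t$ is respected), shows the inclusion is an equality, giving case $(i)$. The dual selection with $t'$ gives case $(ii)$. In the complementary regime some $t$ cycle lengths, and by the dual analysis some $t'$ cycle lengths, \emph{do} reach $o(\sigma)$, so both product sets exhaust all $o(\sigma)$-th roots; the containment is then not tight, and one must compute the genuine eigenvalue set via the single-cycle data and the multiplicity criterion of the first paragraph. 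The outcome is that every $o(\sigma)$-th root occurs except, in a few coincidences, the single value $+1$ or $-1$ (detected by $\sum_k\chi_\lambda(\sigma^k)=0$, resp.\ by the $-1$-eigenspace criterion) or a small cyclotomic block.

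The principal obstacle is proving that this sporadic list is \emph{complete}, i.e.\ that in the full least-common-multiple regime no pair $(\lambda,\mu)$ beyond $(iii)$--$(xi)$ loses a root. This requires two things: sharp single-cycle information, namely exactly which $m$-th roots are missing from $\Sp_\nu((m))$ for the relevant small $\nu$ (this is what produces the factors $x\mp1$ and the cyclotomic $x^2\pm x+1$ in $(viii),(ix)$), and a filling argument showing that, once the off-first-row budget $t$ and its dual $t'$ both allow the full order, the Littlewood--Richardson combinations realize every root except the enumerated ones. The simultaneous constraints on $t$ and $t'$ force $\lambda$ to be a near-rectangle and $\mu$ to have very few parts with tightly constrained least common multiple, collapsing the problem to the finite verification recorded in $(iii)$--$(xi)$; carrying out that reduction cleanly, and handling the genuinely small cases such as $\lambda=(2,2)$ in $(vii)$ by direct character computation, is the technical heart of the proof.
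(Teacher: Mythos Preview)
Your high-level framework --- the $\lambda\leftrightarrow\lambda'$ symmetry, restriction to the Young subgroup, the Littlewood--Richardson rule, and the reduction to single-cycle eigenvalue data --- matches the paper's toolkit. The upper bound $\Sp_\lambda(\mu)\subseteq A(\mu,t)$ via the inequality $\lambda_1\le\sum_i(\nu^{(i)})_1$ is correct and clean.

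The genuine gap is in your lower bound. You assert that $c^\lambda_{\vec\nu}>0$ whenever the $\nu^{(i)}$ are hooks with total off-first-row budget $\le t$; this is false. Take $\lambda=(4,2)$, $\mu=(4,2)$, so $t=2$: the hook choice $\nu^{(1)}=(4)$, $\nu^{(2)}=(1,1)$ has budget $1\le t$, yet $c^{(4,2)}_{(4),(1,1)}=0$ (the skew shape $(4,2)/(4)$ is a single row, which admits no LR filling of content $(1,1)$). So the positivity of the relevant LR coefficients cannot be read off from a simple numerical budget; one has to \emph{construct} specific subpartitions tailored to $\lambda$. The paper does this not by restricting to the full Young subgroup at once but by induction on $n$: it peels off a single cycle of the smallest nontrivial length $d_1$, and for the remaining factor it uses two explicitly defined partitions $\nu$ and $\alpha$ of $n-d_1$, obtained from $[\lambda]$ by $d_1$ successive corner removals (all first corners for $\nu$; $d_1-1$ first corners and one second corner for $\alpha$). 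One then checks $c^\lambda_{\nu,(d_1)}>0$ and $c^\lambda_{\alpha,(d_1-1,1)}>0$ and applies the inductive hypothesis to $\nu$ and $\alpha$; the auxiliary Lemmas in the paper guarantee that $\nu,\alpha$ avoid the exceptional shapes so that induction applies.

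Your treatment of the ``complementary regime'' (both $t$ and $t'$ allow the full order) is also too optimistic. The claim that the simultaneous constraints force $\lambda$ to be a near-rectangle and collapse everything to a finite check does not hold: for generic $\lambda$ with $t,t'\ge k$ there is no such collapse, and one must actually \emph{prove} that every $o(\sigma)$-th root occurs. The paper again proceeds by induction, showing that for the partition $\nu$ above one has $m-\nu_1\ge k$ and $m-\nu_1'\ge k-1$ (via dedicated combinatorial lemmas), so that $\Sp_\nu(\delta)$ already contains all $o(\tau)$-th roots; the missing $d_1$-th roots are then supplied by finding some $\gamma\vdash d_1$ with $c^\lambda_{\nu,\gamma}>0$ and $\Sp_\gamma((d_1))$ complementing what $\nu$ provides. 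When only ``bad'' $\gamma$ (namely $(d_1)$, $(1^{d_1})$, $(d_1-1,1)$, $(2,1^{d_1-2})$, $(2^2)$, $(2^3)$, $(3^2)$) occur, a further case analysis with the partition $\alpha$ closes the argument. The sporadic list $(iii)$--$(xi)$ is not obtained by a reduction to finitely many $n$; rather, small $n\le 13$ are checked by computer, and for $n\ge 14$ the induction together with shape-specific lemmas (handling $(n-2,2)$, $(n-2,1,1)$, and a short list of near-hooks directly) rules out any further exceptions.
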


This gives a complete answer to Problem~\ref{p:1} in the case of symmetric groups and $\mathbb{F}=\mathbb{C}$. As a consequence, we see that the description is especially simple in some cases.
For example, in the above-mentioned result, where all cycle lengths divide the length of the largest one, the order of the permutation is equal to the length of the largest cycle and hence items $(i)$ and $(ii)$ are possible only for the trivial and sign representations. Similarly, if $\eta$ is a complex number such that $\eta^{p^k}=1$ for a prime power $p^k$ dividing the order of $\sigma$, then $\eta$ lies in the spectrum of $\rho(\sigma)$, except perhaps for the short list in items $(iii)-(xi)$, in particular, $p^k\leqslant4$. This observation generalizes the results for $p^k=2$
obtained in~\cite{PPV24} for $\eta=1$ and in~\cite{V25} for $\eta=-1$.

Using Theorem~\ref{t:main}, we also obtain a description of the eigenvalues for permutations in irreducible representations of $A_n$. Recall that if $\lambda\neq\lambda'$,
then $\chi_\lambda:=\chi^\lambda\downarrow^{S_n}_{A_n}$ is an irreducible character of $A_n$,
while if $\lambda=\lambda'$, then the restriction
$\chi^\lambda\downarrow^{S_n}_{A_n}$ is the sum of two irreducible characters of $A_n$, which we denote by $\chi_\lambda^+$ and $\chi_\lambda^-$. 
It is also well known that if the cycle type $\mu$ consists of odd distinct numbers, then the corresponding permutations form two conjugate classes in $A_n$, which are denoted by $\mu^+$ and $\mu^-$.
\begin{theorem}\label{t:main2}
Let $n\geqslant3$ be an integer. Suppose that $\sigma\in A_n$ is a permutation with cycle type $\mu$ and $\rho:A_n\rightarrow GL(V)$ is a nontrivial irreducible representation of $A_n$ over $\mathbb{C}$ which corresponds to a partition $\lambda$ with $\lambda_1\geqslant\lambda_1'$. Denote the character of $\rho$ by $\chi$. Denote the minimal polynomial of $\rho(\sigma)$ by $p(x)$ and the set of roots of $p(x)$ by 
$\Sp(\mu)$. Then $p(x)\neq x^{o(\sigma)}-1$ if and only if one of the following statements holds.
\begin{enumerate}[$(i)$]
\item There are no $t=n-\lambda_1$ elements in $\mu$
whose least common multiple equals $o(\sigma)$.
In this case, we have $\Sp(\mu)=\{
\eta_{1}\cdots\eta_{t}~|~\eta_j^{\mu_{i_j}}=1, \text{ where } 
1\leqslant j\leqslant t\text{ and }\mu_{i_j}\in\mu\}$.
\item $n\geqslant5$ is odd, $\lambda=(n-1,1)$, $\mu\in\{(n)^+,(n)^-\}$, $p(x)=\frac{x^n-1}{x-1}$.
\item $\lambda=(4^2)$, $\mu\in\{(5,3)^+,(5,3)^-\}$, and $p(x)=\frac{x^{15}-1}{x-1}$.
\item $n=5$, $(\chi,\mu)\in\{(\chi_{(3,1,1)}^+,(5)^+),(\chi_{(3,1,1)}^-,(5)^-)\}$, $p(x)=\frac{x^5-1}{(x-\eta^2)(x-\eta^3)}$, where $\eta=\cos{\frac{2\pi}{5}}+i\sin\frac{2\pi}{5}$.
\item $n=5$, $(\chi,\mu)\in\{(\chi_{(3,1,1)}^+,(5)^-),(\chi_{(3,1,1)}^-,(5)^+)\}$, $p(x)=\frac{x^5-1}{(x-\eta)(x-\eta^4)}$, where $\eta=\cos{\frac{2\pi}{5}}+i\sin\frac{2\pi}{5}$.
\item $n=4$, $(\chi,\mu)\in\{(\chi_{(2,2)}^+,(3,1)^+),(\chi_{(2,2)}^-,(3,1)^-)\}$, $p(x)=x-\omega$, where $\omega=\frac{-1+i\sqrt{3}}{2}$.
\item $n=4$, $(\chi,\mu)\in\{(\chi_{(2,2)}^+,(3,1)^-),(\chi_{(2,2)}^-,(3,1)^+)\}$, $p(x)=x-\omega^2$, where $\omega=\frac{-1+i\sqrt{3}}{2}$.
\item $n=4$, $(\chi,\mu)\in\{(\chi_{(2,2)}^+,(2,2)),(\chi_{(2,2)}^-,(2,2))\}$, $p(x)=x-1$.
\item $n=3$, $(\chi,\mu)\in\{(\chi_{(2,1)}^+,(3)^+),(\chi_{(2,1)}^-,(3)^-)\}$, $p(x)=x-\omega$, where $\omega=\frac{-1+i\sqrt{3}}{2}$.
\item $n=3$, $(\chi,\mu)\in\{(\chi_{(2,1)}^+,(3)^-),(\chi_{(2,1)}^-,(3)^+)\}$, $p(x)=x-\omega^2$, where $\omega=\frac{-1+i\sqrt{3}}{2}$.

\end{enumerate}
\end{theorem}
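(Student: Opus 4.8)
The plan is to deduce everything from Theorem~\ref{t:main} through the restriction $\chi^\lambda\downarrow^{S_n}_{A_n}$, distinguishing the two ways in which a diagram behaves on $A_n$. Suppose first that $\lambda\neq\lambda'$. Then $\chi_\lambda=\chi^\lambda\downarrow^{S_n}_{A_n}$ is irreducible and, for $\sigma\in A_n$, the matrix $\rho(\sigma)$ is exactly the value of the $S_n$-representation at $\sigma$; hence $\Sp(\mu)$ and $p(x)$ coincide with $\Sp_\lambda(\mu)$ and $p^\mu_\lambda(x)$. It then suffices to restrict Theorem~\ref{t:main} to even $\sigma$ and rewrite it under the normalization $\lambda_1\geqslant\lambda_1'$. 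Since $\sgn(\sigma)=1$ on $A_n$, items $(i)$ and $(ii)$ become the same sign-free description, and because $n-\lambda_1\leqslant n-\lambda_1'$ the hypothesis of $(i)$ is the weaker one and is implied by that of $(ii)$, so only item $(i)$ of the present theorem is needed. For the sporadic entries $(iii)$--$(xi)$ I would test parity: every $\mu$ for which $\sigma$ is odd is discarded (this removes $(v),(vi)$, the $n$-cycles of $(iii),(iv)$ when $n$ is even, the $6$-cycles of $(viii),(ix)$, all of $(xi)$, and the entries of $(x)$ containing a transposition), and each remaining entry with $\lambda_1<\lambda_1'$ is rewritten through its conjugate via $\chi_\lambda=\chi_{\lambda'}$. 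The survivors are item $(iii)$ with $n\geqslant5$ odd and the pair $((4^2),(5,3))$, yielding items $(ii)$ and $(iii)$ here; the split labels $\mu^\pm$ are harmless, since for $\lambda\neq\lambda'$ the character, and hence the spectrum, agrees on $\mu^+$ and $\mu^-$. The self-conjugate diagram $\lambda=(2,2)$ occurring in item $(vii)$ is deferred to the next case.

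Now let $\lambda=\lambda'$, so that $\chi^\lambda\downarrow^{S_n}_{A_n}=\chi_\lambda^++\chi_\lambda^-$ and $V=V^+\oplus V^-$. I would invoke the classical fact that $\chi_\lambda^+$ and $\chi_\lambda^-$ agree on every conjugacy class except the split class whose cycle type is the multiset of principal hook lengths $h_1>\cdots>h_d$ of $\lambda$ (these are distinct odd numbers with $\sum h_i=n$), on which they take the values $\tfrac12\bigl(\varepsilon\pm\sqrt{\varepsilon\,h_1\cdots h_d}\bigr)$ with $\varepsilon=(-1)^{(n-d)/2}$. The eigenvalue multiplicities on $V^\pm$ are $m^\pm_j=\tfrac1{o(\sigma)}\sum_k\chi_\lambda^\pm(\sigma^k)\eta^{-jk}$, so $m^+_j-m^-_j$ is governed only by those powers $\sigma^k$ lying in the critical class. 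Since taking a $k$-th power can only divide cycle lengths while the distinct $h_i$ sum to $n$, such a power exists only when $\sigma$ itself has cycle type $(h_1,\dots,h_d)$. Thus for every $\sigma$ outside the critical class one has $m^+_j=m^-_j$ for all $j$; the two summands carry identical spectra, the minimal polynomial on each equals that of $\rho^\lambda(\sigma)$ on $V$, and Theorem~\ref{t:main} (with $\lambda_1=\lambda_1'$, so that $(i)$ and $(ii)$ coincide) returns item $(i)$ of the present theorem, except for the single sporadic pair $((2,2),(2,2))$, which produces item $(viii)$.

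It remains to treat a self-conjugate $\lambda$ on its own critical class $\mu(\lambda)^\pm$, the source of the genuinely new exceptions. Here $|m^+_j-m^-_j|\leqslant\tfrac{\varphi(o(\sigma))}{o(\sigma)}\sqrt{h_1\cdots h_d}\leqslant\sqrt{h_1\cdots h_d}$, while the common average multiplicity is $\tfrac12\dim\chi^\lambda/o(\sigma)$. The main estimate I would carry out is that $\dim\chi^\lambda>2\,o(\sigma)\sqrt{h_1\cdots h_d}$ for every self-conjugate $\lambda$ other than $(2,1),(2,2),(3,1,1)$; through the hook-length formula this reduces to a size bound on $n!/\prod h$, and it forces $m^\pm_j\geqslant1$ for all $j$, so that both $V^\pm$ realize every $o(\sigma)$-th root of unity and $p(x)=x^{o(\sigma)}-1$, with no new exception. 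For the three remaining diagrams I would compute $\chi_\lambda^\pm$ on the critical class and its coprime powers directly---Murnaghan--Nakayama for the $S_n$-values and the square-root formula above for the split---and read off the multiplicities: this gives the one-dimensional evaluations $x-\omega$ and $x-\omega^2$ for $(2,1)$ at $n=3$ (items $(ix),(x)$) and for $(2,2)$ at $n=4$ (items $(vi),(vii)$), and the degree-three polynomials $\tfrac{x^5-1}{(x-\eta^2)(x-\eta^3)}$, $\tfrac{x^5-1}{(x-\eta)(x-\eta^4)}$ for $(3,1,1)$ at $n=5$ (items $(iv),(v)$), the two members of each pair being interchanged by complex conjugation according to whether $\sigma$ lies in $\mu^+$ or $\mu^-$. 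The principal obstacle is precisely this uniform lower bound on $\dim\chi^\lambda$ that excludes all but the three small diagrams; everything else is bookkeeping against Theorem~\ref{t:main} and a handful of explicit character evaluations.
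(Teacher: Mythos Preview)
Your reduction for $\lambda\neq\lambda'$, and for $\lambda=\lambda'$ with $\mu$ not the critical hook type, is correct and is exactly what the paper does: in both situations every power $\sigma^k$ satisfies $\chi(\sigma^k)=\chi^\lambda(\sigma^k)/d$ with $d\in\{1,2\}$, so the eigenspace dimensions are proportional and Theorem~\ref{t:main} applies verbatim. Your observation that a power $\sigma^k$ can have the distinct-parts cycle type $(h_1,\dots,h_d)$ only if $\sigma$ itself does (because any $\mu_i$ with $\gcd(\mu_i,k)>1$ would contribute repeated parts) is the right justification here.

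The gap is in the critical case $\lambda=\lambda'$, $\mu=(h_1,\dots,h_d)$. Your bound $|m_j^+-m_j^-|\leqslant\tfrac{\varphi(o(\sigma))}{o(\sigma)}\sqrt{h_1\cdots h_d}$ is fine, but the quantity $\tfrac12\dim\chi^\lambda/o(\sigma)$ is only the \emph{average} of $m_j^\pm$ over $j$, not a lower bound for each individual $m_j^++m_j^-$; so the inequality $\dim\chi^\lambda>2\,o(\sigma)\sqrt{h_1\cdots h_d}$ does not by itself force $m_j^\pm\geqslant1$. What you would actually need is a lower bound on each multiplicity $m_j^++m_j^-$ in the $S_n$-representation, and Theorem~\ref{t:main} only gives $\geqslant1$. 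Worse, your stated inequality is already false outside your three exceptions: for $\lambda=(3,2,1)$ one has $\dim\chi^\lambda=16$ while $2\cdot5\cdot\sqrt5\approx22.4$, and for $\lambda=(4,1,1,1)$ one has $\dim\chi^\lambda=20$ while $2\cdot7\cdot\sqrt7\approx37$. (In both cases the conclusion $p(x)=x^{o(\sigma)}-1$ does hold, but not by your estimate.) A salvage along your lines would require uniform upper bounds on $|\chi^\lambda(\sigma^k)|$ for \emph{all} nontrivial powers, which is a separate and nontrivial input.

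The paper sidesteps this entirely. It checks $n\leqslant13$ by machine, and for $n\geqslant14$ argues by induction: writing $\sigma=\pi\tau\in A_{n-a_l}\times A_{a_l}$, one decomposes $\chi^\pm_\lambda\downarrow_{A_{n-a_l}\times A_{a_l}}$, observes via the branching law that none of the $A_{n-a_l}$-constituents can correspond to a partition in $\{(m),(1^m),(m-1,1),(2,1^{m-2})\}$ (since $\lambda_1=\tfrac{a_1+1}{2}<m-1$), applies the inductive hypothesis to $\pi$ to get the full $o(\pi)$-spectrum in each, and then checks that the $A_{a_l}$-constituents supply every $a_l$-th root of unity. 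No dimension estimate is needed.
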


\section{Notation and preliminaries}
\subsection{Partitions and irreducible characters of $S_n$}
In the basic concepts and notations of the representation theory of $S_n$ and $A_n$, we follow~\cite{JK}. Denote by $\mathbb{N}$ the set $\{1,2,\ldots\}$ of the natural numbers. 
If $n\in\mathbb{N}$, then a sequence $\lambda=(\lambda_1,\ldots,\lambda_k)$ of natural numbers is called a {\it partition} of $n$ if $\lambda_1\geqslant\lambda_2\geqslant\ldots\geqslant\lambda_k$ and $\sum\limits_{i=1}^k\lambda_i=n$. 
In this case, $n$ is called the {\it size} of $\lambda$ which is also denoted by $|\lambda|$. The set of all partitions of $n$ is denoted by $\mathcal{P}(n)$. If $\lambda\in\mathcal{P}(n)$, then we also write $\lambda\vdash n$. 
It is natural to represent partitions of $n$ 
with Young diagrams which consist of left justified array of boxes having $\lambda_i$ boxes in the $i$-th row from the top.
If $\lambda\in\mathcal{P}(n)$, then the Young diagram corresponding to $\lambda$ is denoted by $[\lambda]$.
Sometimes it is also convenient to consider the Young diagram
$[\lambda]$ as a subset of the Cartesian plane: 
$[\lambda]=\{(i,j)\in\mathbb{N}\times\mathbb{N}~|~1\leqslant i\leqslant k, 1\leqslant j\leqslant\lambda_i\}$.

The partition which corresponds to the transpose diagram of $[\lambda]$ is denoted by $\lambda'$ and called {\it conjugate} to $\lambda$. 

If $\nu\vdash m$ and $\lambda\vdash n$ , where $m\leqslant n$,
then we write $\nu\subseteq\lambda$ if $\nu_i\leqslant\lambda_i$ for every $i$. 
We say that a box of a diagram $[\lambda]$ is a {\it corner}
if it is located at the right end of a row and having no box below.
Note that after removing a corner we obtain a correct Young diagram of size $n-1$. Moreover, $\nu\subseteq\lambda$ if and only if $[\nu]$ can be obtained from $[\lambda]$ by removing corners step by step.
By definition, all corners of the diagram lie in rows of different lengths, so we can order the corners according to the length of the row: the corner lying in a longest row is called the first, the corner lying in a row whose length is the second longest is called the second, and so on. Note that $[\lambda]$ always has the first corner, and the second exists only if $\lambda$ contains at least two distinct elements.

The set of ordinary irreducible characters of a finite group $G$
is denoted by $\Irr(G)$. The set $\Irr(S_n)$ is naturally in bijection with $\mathcal{P}(n)$, the character corresponding to $\lambda\vdash n$ is denoted by $\chi^\lambda$.
In particular, if $\lambda=(n)$, then $\chi^\lambda$ is the principal character corresponding to the trivial representation of $S_n$. On the other hand,
if $\lambda=(1^n)=(n)'$, then $\chi^\lambda$ corresponds to the sign representation,
where $S_n\ni\sigma\mapsto \sgn(\sigma)$.
In general, it is true that $\chi^{\lambda'}=\chi^\lambda\cdot\chi^{(1^n)}$.

Let us briefly discuss irreducible representations of $A_n$. If $\lambda\neq\lambda'$, then $\chi_\lambda:=\chi^\lambda\downarrow A_n=\chi^{\lambda'}\downarrow A_n$ is irreducible.  
If $\lambda=\lambda'$, then $\chi^\lambda\downarrow A_n$ 
splits into two irreducible and conjugate characters denoted by $\chi_\lambda^{\pm}$. These two sets give a complete system of  irreducible characters of $A_n$. Recall that
if $\sigma\in A_n$ and its cycle type differs from $(a_1,\ldots,a_k)$,
where $a_1,\ldots, a_k$ are the lengths of hooks along the main diagonal of $[\lambda]$, then $\chi_\lambda^\pm(\sigma)=\frac{1}{2}\chi^\lambda(\sigma)$.
If the cycle type of $\sigma$ equals $(a_1,\ldots,a_k)$,
then we say that $\sigma$ has cycle type $(a_1,\ldots,a_k)^+$
if $\sigma$ is conjugate in $A_n$ with the permutation $(1,2,\ldots, a_1)(a_1+1,\ldots,a_1+a_2)\ldots(a_1+\ldots+a_{k-1}+1,\ldots,a_1+\ldots+a_k)$, and otherwise we say that $\sigma$ has cycle type $(a_1,\ldots,a_k)^-$. 
If $\sigma$ has cycle type $(a_1,\ldots,a_k)^+$, then 
by convention it is assumed that $\chi^{\pm}_\lambda(\sigma)=\frac{1}{2}(\epsilon\pm\sqrt{\epsilon a_1\cdots a_k})$, where $\epsilon=(-1)^{(n-k)/2}$. 
We refer the reader to \cite[Chapter~2]{JK} for a more detailed discussion.

\subsection{The Littlewood--Richardson rule}
Suppose that $m,n\in\mathbb{N}$ with $m\leqslant n$.
For two partitions $\gamma\vdash m$ and $\lambda\vdash n$
satisfying $\gamma\subseteq\lambda$, 
we define the skew diagram $[\lambda/\gamma]$ by deleting all the boxes in $[\lambda]$ which also belong to $[\gamma]$. 
By {\it a skew semi-standard tableau $T$ of shape $\lambda/\gamma$},
we mean a labeling of the boxes of $[\lambda/\gamma]$ with positive integers such that labels are non-decreasing along each row and strictly increasing down each column. In this case, the sequence $cont(T)=(c_1,\ldots,c_i,\ldots)$, where
$c_i$ is the number of times that $i$ is used in $T$, is called {\it the content} of $T$. For example,

\begin{center}
\ytableausetup{boxsize=1em}
$S=
\begin{ytableau}
   \none & \none & \none & 2 \\
  \none & \none  & 4 & 4  \\
  2 & 3 
\end{ytableau}$
\end{center}
is a skew semi-standard tableau of shape $\lambda/\gamma$, where 
$\lambda=(5, 4, 4, 2)$, $\gamma=(5, 3, 2)$, and $cont(S)=(0,2,1,2)$.

We can read entries of a skew semi-standard tableau $T$ from right to left in each row and one row after the other, downwards, obtaining a sequence of positive integers which is denoted by $row(T)^r$.
A skew semi-standard tableau $T$ is called {\it Littlewood--Richardson tableau} (or $\LR$-tableau) if the sequence $row(T)^r$ satisfies the following condition: for each place $j$ the number of $i'$s which occur among the first $j$ elements is greater than or equal to the number of $(i+1)'$s for each positive integer $i$.
As an example, consider the following tableaux of shape $\lambda/\gamma$, where $\gamma=(2,1)$ and $\lambda=(4,3,2)$.
\begin{center}
$T_1
=
\begin{ytableau}
  \none & \none & 2 & 1 \\
  \none & 1 & 3  \\
  2 & 2
\end{ytableau}\quad
T_2
=
\begin{ytableau}
  \none & \none & 1 & 1 \\
  \none & 1 & 2  \\
  3 & 3
\end{ytableau}
\quad
T_3
=
\begin{ytableau}
  \none & \none & 1 & 1 \\
  \none & 1 & 2  \\
  1 & 3
\end{ytableau}
$
\end{center}
We see that $row(T_1)^r=123122$, $row(T_2)^r=112133$,
and $row(T_3)^r=112131$, so in this example the only $\LR$-tableau is $T_3$.

\begin{theorem}\cite[Theorem~2.8.13]{JK}{\em(Littlewood--Richardson rule)}
Let $m<n$ be natural numbers and $\lambda\vdash n$.
Then
$$\chi^\lambda\downarrow^{S_n}_{S_m\times S_{n-m}}=\sum\limits_{\nu\vdash m, \mu\vdash n-m}c_{\mu,\nu}^\lambda\cdot\chi^\mu\cdot\chi^\nu,$$
where $c^\lambda_{\mu,\nu}$ equals the number of the Littlewood--Richardson tableaux of shape $\lambda/\mu$ and content $\nu$.
\end{theorem}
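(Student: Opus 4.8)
The plan is to pass to the ring of symmetric functions and reduce the statement to the Schur-positivity expansion of a (skew) Schur function, where the combinatorics of tableaux becomes transparent. First I would invoke Frobenius reciprocity to restate the claim as a multiplicity in an induced module: the coefficient $c^\lambda_{\mu,\nu}$ of $\chi^\mu\cdot\chi^\nu$ (with $\mu\vdash n-m$, $\nu\vdash m$) in the restriction $\chi^\lambda\downarrow^{S_n}_{S_m\times S_{n-m}}$ equals the multiplicity $\langle\chi^\lambda,\ (\chi^\nu\otimes\chi^\mu)\uparrow^{S_n}_{S_m\times S_{n-m}}\rangle$ of $\chi^\lambda$ in the induction product. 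Applying the Frobenius characteristic map $\mathrm{ch}$, which is an isometric isomorphism from the representation ring $\bigoplus_k R(S_k)$ onto the ring $\Lambda$ of symmetric functions, sending $\chi^\alpha$ to the Schur function $s_\alpha$ and the induction product to ordinary multiplication, the identity to be proved becomes $s_\mu\, s_\nu=\sum_{\lambda}c^\lambda_{\mu,\nu}\, s_\lambda$ in $\Lambda$. By the standard adjunction $\langle s_\mu s_\nu, s_\lambda\rangle=\langle s_\nu, s_{\lambda/\mu}\rangle$ for the skew Schur function, this is equivalent to the single combinatorial identity
$$s_{\lambda/\mu}=\sum_{\nu\vdash m}c^\lambda_{\mu,\nu}\, s_\nu,$$
where $c^\lambda_{\mu,\nu}$ is to be shown equal to the number of $\LR$-tableaux of shape $\lambda/\mu$ and content $\nu$.

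The next step is purely combinatorial. I would start from the monomial expansion $s_{\lambda/\mu}=\sum_T x^{cont(T)}$, the sum running over all skew semi-standard tableaux $T$ of shape $\lambda/\mu$ (this is either taken as the definition of the skew Schur function or obtained from a short Pieri-type computation). The task is to show that, grouped by content, these monomials assemble into $\sum_\nu c^\lambda_{\mu,\nu}\, s_\nu$. The clean route is through the Robinson--Schensted--Knuth correspondence and Schützenberger's jeu de taquin: to each skew tableau $T$ one associates its rectification $\mathrm{rect}(T)$, a straight-shape tableau obtained by successive inward slides, and one proves that $\mathrm{rect}(T)$ does not depend on the chosen order of slides (equivalently, that the reverse reading word $row(T)^r$ determines a well-defined Knuth/plactic class). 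Collecting the tableaux $T$ according to the shape of $\mathrm{rect}(T)$ then yields $s_{\lambda/\mu}=\sum_\nu d_{\lambda/\mu,\nu}\, s_\nu$, where $d_{\lambda/\mu,\nu}$ counts the skew tableaux that rectify to any fixed straight tableau of shape $\nu$, which one may take to be the superstandard tableau $U_\nu$ whose $i$-th row consists entirely of $i$'s.

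Finally I would identify $d_{\lambda/\mu,\nu}$ with $c^\lambda_{\mu,\nu}$. The key lemma here is that a skew semi-standard tableau rectifies to $U_\nu$ if and only if its reverse reading word $row(T)^r$ is a lattice (Yamanouchi) word, that is, satisfies exactly the $\LR$ condition stated in the excerpt; equivalently, each Knuth class of straight shape $\nu$ contains a unique tableau whose reading word is a lattice word, namely $U_\nu$. Thus the $\LR$-tableaux of shape $\lambda/\mu$ and content $\nu$ are precisely the preimages of $U_\nu$ under rectification, giving $d_{\lambda/\mu,\nu}=c^\lambda_{\mu,\nu}$ and hence the theorem.

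I expect the main obstacle to be exactly this combinatorial core: establishing the well-definedness of rectification and the confluence/uniqueness statement that each plactic class of a given shape contains a single lattice word. These are the genuinely technical facts about jeu de taquin and Knuth transformations; once they are in place, the bijection between $\LR$-tableaux and the rectification preimages of $U_\nu$, and with it the whole statement, follows formally from the reductions above. An alternative to circumvent part of this difficulty would be to prove the identity by induction using the Pieri rule together with a Bender--Knuth-type involution, but I would still anticipate the lattice-word bookkeeping to be the delicate point.
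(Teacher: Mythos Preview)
The paper does not prove this statement at all: it is quoted verbatim as \cite[Theorem~2.8.13]{JK} and used as a black box, so there is no ``paper's own proof'' to compare against. Your outline is a correct and standard route to the Littlewood--Richardson rule (Frobenius characteristic to reduce to $s_{\lambda/\mu}=\sum_\nu c^\lambda_{\mu,\nu}s_\nu$, then jeu de taquin rectification and the lattice-word characterisation of the preimages of the superstandard tableau), with the technical weight correctly identified in the confluence of jeu de taquin and the uniqueness of the Yamanouchi representative in each plactic class.

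If you want a point of contrast with the cited source: James--Kerber do \emph{not} go through symmetric functions and jeu de taquin. Their Chapter~2.8 argument is representation-theoretic and inductive, working directly with Specht modules and standard bases, building the rule up from the Young/Pieri case via determinantal identities for characters. Your symmetric-function approach (closer to Fulton or Stanley) is cleaner conceptually and isolates the combinatorics in one place, at the cost of importing the jeu de taquin machinery; the James--Kerber proof stays inside the module category and avoids that machinery, at the cost of a less transparent induction. Either way, for the purposes of the present paper only the \emph{statement} is needed, and your proposal is more than the paper itself supplies.
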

The coefficients $c^\lambda_{\mu,\nu}$ are called {\it Littlewood--Richardson coefficients}.
It is well-known that $c^{\lambda}_{\mu,\nu}=c^{\lambda}_{\nu,\mu}$ and $c^{\lambda}_{\mu,\nu}=0$ unless $\mu,\nu\subseteq\lambda$. Moreover, if $\mu\subseteq\lambda$, then there exists at least one $\nu$ such that $c^{\lambda}_{\mu,\nu}\neq0$.

The special case where $n-m=1$ is known as a separate useful statement.
\begin{theorem}\cite[Theorem~2.3.4]{JK}{\em(Branching Law)}
Suppose that $\chi^\lambda$ is an irreducible character of $S_n$, where $\lambda\vdash n$ and $n\geqslant2$.
Then for the restriction of $\chi^\lambda$ to the stabilizer $S_{n-1}$ of the point $n$, we have
$$\chi^\lambda\downarrow^{S_n}_{S_{n-1}}=\sum\limits_{\mu\vdash n-1, \mu\subseteq\lambda}\chi^{\mu}.$$
\end{theorem}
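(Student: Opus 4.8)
The plan is to derive the Branching Law as the special case $m=n-1$ of the Littlewood--Richardson rule stated just above, so that no machinery beyond that rule is required. First I would identify the stabilizer of the point $n$ in $S_n$ with the Young subgroup $S_{n-1}\times S_1$, in which the factor $S_1$ acts trivially on $\{n\}$. Under this identification, restricting $\chi^\lambda$ to the stabilizer is the same as restricting it to $S_{n-1}\times S_1$, and the $S_1$-character $\chi^{(1)}$ is trivial and so contributes a factor of $1$.

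Next I would apply the Littlewood--Richardson rule with $m=n-1$ and $n-m=1$. Since $(1)$ is the only partition of $1$, the index running over partitions of $n-m$ is forced to equal $(1)$, and the right-hand side collapses to
$$\chi^\lambda\downarrow^{S_n}_{S_{n-1}\times S_1}=\sum_{\nu\vdash n-1}c^\lambda_{(1),\nu}\,\chi^\nu\cdot\chi^{(1)},$$
so that it only remains to evaluate the coefficients $c^\lambda_{(1),\nu}$. Using the symmetry $c^\lambda_{(1),\nu}=c^\lambda_{\nu,(1)}$, I would read off $c^\lambda_{\nu,(1)}$ as the number of $\LR$-tableaux of shape $\lambda/\nu$ with content $(1)$. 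Such a tableau consists of a single box labelled $1$, which automatically satisfies the lattice condition in the definition of an $\LR$-tableau; it exists precisely when $[\lambda/\nu]$ is a single box, and in that case it is unique. Hence $c^\lambda_{\nu,(1)}=1$ when $\nu\subseteq\lambda$ and $|\lambda|-|\nu|=1$, and $c^\lambda_{\nu,(1)}=0$ otherwise.

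Substituting these values and renaming the surviving index $\nu$ as $\mu$ yields $\chi^\lambda\downarrow^{S_n}_{S_{n-1}}=\sum_{\mu}\chi^\mu$, summed over those $\mu$ with $\mu\subseteq\lambda$ and $|\lambda|-|\mu|=1$. To match the statement verbatim, I would invoke the remark from the preliminaries that $[\mu]$ is a sub-diagram of $[\lambda]$ exactly when it arises by removing corners step by step: for $\mu\vdash n-1$ the condition $\mu\subseteq\lambda$ is equivalent to $[\mu]$ being obtained from $[\lambda]$ by deleting a single corner, which is precisely the one-box condition above. The argument is almost entirely formal once the Littlewood--Richardson rule is available; the only step needing genuine care is this bookkeeping, namely confirming that a one-box skew shape carries a unique $\LR$-tableau and that the resulting index set $\{\mu\vdash n-1:\mu\subseteq\lambda\}$ coincides with the set of corner removals. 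As an alternative not relying on the rule, one could argue directly that the restriction of the Specht module $S^\lambda$ to $S_{n-1}$ admits a filtration whose subquotients are the $S^\mu$ for exactly these $\mu$, but in the present context the specialization above is the most economical route.
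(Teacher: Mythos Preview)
Your proposal is correct. The paper does not give its own proof of the Branching Law---it simply cites \cite[Theorem~2.3.4]{JK}---but the sentence immediately preceding the statement (``The special case where $n-m=1$ is known as a separate useful statement'') makes clear that the paper regards it exactly as you do, namely as the $n-m=1$ instance of the Littlewood--Richardson rule, so your derivation is precisely the argument the paper has in mind.
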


The following statement follows from the Littlewood--Richardson rule  and is the most useful tool in further proofs.
\begin{lemma}\label{l:Spec}
Suppose that $\chi^\lambda$ is an irreducible character of $S_n$, where $\lambda\vdash n$.
Suppose that $1\leqslant m<n$ and $\sigma\in S_m\times S_{n-m}\leqslant S_n$. 
Write $\sigma=\pi_1\pi_2$, where $\pi_1\in S_m$ and $\pi_2\in S_{n-m}$.
Then $$\Sp_\lambda(\sigma)=\bigcup\limits_{\nu\vdash m, \gamma\vdash n-m, c^\lambda_{\nu,\gamma}\neq0}\Sp_\nu(\pi_1)\times \Sp_\gamma(\pi_2),$$
where $\times$ on the right means the usual multiplication of numbers.
\end{lemma}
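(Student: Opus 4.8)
The plan is to translate the decomposition of the restricted character provided by the Littlewood--Richardson rule into a statement about the restricted representation, and then to read off the eigenvalues using two elementary facts from linear algebra: the spectrum of a block-diagonal operator is the union of the spectra of its blocks, and the spectrum of a Kronecker product is the set of pairwise products of the spectra of its factors.

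First I would let $\rho^\lambda$ denote the irreducible representation of $S_n$ affording $\chi^\lambda$ and invoke the Littlewood--Richardson rule at the level of representations to write
$$\rho^\lambda\downarrow^{S_n}_{S_m\times S_{n-m}}\;\cong\;\bigoplus_{\nu\vdash m,\ \gamma\vdash n-m}\bigl(\rho^\nu\boxtimes\rho^\gamma\bigr)^{\oplus c^\lambda_{\nu,\gamma}},$$
where $\rho^\nu\boxtimes\rho^\gamma$ denotes the outer tensor product of the irreducible representations of $S_m$ and $S_{n-m}$ indexed by $\nu$ and $\gamma$. Since $\sigma=\pi_1\pi_2$ with $\pi_1\in S_m$ and $\pi_2\in S_{n-m}$ lying in commuting direct factors, each summand evaluates at $\sigma$ as the Kronecker product $\rho^\nu(\pi_1)\otimes\rho^\gamma(\pi_2)$.

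It then remains to compute eigenvalues. Because $\sigma$ has finite order, $\rho^\lambda(\sigma)$ is diagonalizable, so $\Sp_\lambda(\sigma)$ is precisely the set of its distinct eigenvalues, which is exactly the set of roots of its minimal polynomial. Consequently the multiplicities $c^\lambda_{\nu,\gamma}$ are irrelevant at the set level: a summand contributes to $\Sp_\lambda(\sigma)$ as soon as it occurs, that is, whenever $c^\lambda_{\nu,\gamma}\neq0$. Taking the union of spectra over the block-diagonal decomposition, and using that the eigenvalues of $\rho^\nu(\pi_1)\otimes\rho^\gamma(\pi_2)$ are exactly the products $\alpha\beta$ with $\alpha\in\Sp_\nu(\pi_1)$ and $\beta\in\Sp_\gamma(\pi_2)$, yields the claimed formula.

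I do not anticipate a genuine obstacle here, since the statement is essentially a repackaging of the Littlewood--Richardson rule together with standard linear algebra. The only point requiring mild care is to separate the set-level assertion that $\Sp_\lambda(\sigma)$ records (through the roots of the minimal polynomial, ignoring multiplicities) from the multiplicities $c^\lambda_{\nu,\gamma}$ themselves, so that the union on the right ranges over all pairs $(\nu,\gamma)$ with $c^\lambda_{\nu,\gamma}\neq0$ rather than being weighted by these coefficients.
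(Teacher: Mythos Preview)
Your proposal is correct and follows essentially the same approach as the paper's proof: apply the Littlewood--Richardson rule to decompose the restriction, then read off eigenvalues using the fact that the eigenvalues of a tensor product of diagonalizable matrices are the pairwise products. The paper's version is terser and does not separately spell out the block-diagonal or set-versus-multiplicity points, but these are the same argument.
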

\begin{proof}
By the Littlewood--Richardson rule, we have 
$$\chi^\lambda\downarrow^{S_n}_{S_m\times S_{n-m}}=\sum\limits_{\nu\vdash m, \mu\vdash n-m}c_{\mu,\nu}^\lambda\cdot \chi^\mu\cdot \chi^\nu.
$$
Now the character $\chi^\mu\cdot \chi^\nu$ corresponds to the tensor product of representations. 
If $A$ and $B$ are square diagonalizable matrices over $\mathbb{C}$, then the set of eigenvalues of $A\otimes B$ 
comprises of the pairwise products of eigenvalues of $A$ and $B$.
This proves the assertion.
\end{proof}

Suppose that $\lambda$ and $\mu$ are partitions such that $\mu\subseteq\lambda$.
Denote by $\mathcal{LR}([\lambda/\mu])$ 
the set of all partitions $\nu\vdash|\lambda|-|\mu|$
such that $c^\lambda_{\mu,\nu}\neq0$.

Suppose that $T$ and $S$ are skew diagrams.
Following~\cite{GS18}, we write 
$T\cong S$ if 
$T$ is a translation of $S$ on the  Cartesian plane,
and  we write $T\cong S^\circ$ if $T$ is obtained from $S$ by the composition of  a rotation by $180^\circ$ degrees and a translation.

\begin{lemma}\cite[Lemma~4.4]{BK99},\cite[Lemma~2.3]{GS18}\label{l:unique}
Suppose that $\lambda$ and $\mu$
are partitions such that $\mu\subseteq\lambda$.
We have $|\mathcal{LR}([\lambda/\mu])|=1$ if and only if there exists a partition $\alpha$ of size $|\lambda|-|\mu|$ such that $[\lambda/\mu]\cong[\alpha]$ or $[\lambda/\mu]\cong[\alpha]^\circ$.
\end{lemma}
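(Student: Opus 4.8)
The plan is to read $|\mathcal{LR}([\lambda/\mu])|$ as the number of distinct contents occurring among the Littlewood--Richardson tableaux of shape $\lambda/\mu$, since by the Littlewood--Richardson rule $\nu\in\mathcal{LR}([\lambda/\mu])$ exactly when some such tableau has content $\nu$. The ``if'' direction then follows from two symmetries of this tableau count. If $[\lambda/\mu]\cong[\alpha]$, a translation affects neither the column-strictness of a filling, nor its content, nor the lattice condition on $row(T)^r$, so the $\LR$-tableaux of $\lambda/\mu$ are precisely those of the straight shape $[\alpha]$; the only content that appears is $\alpha$ and $\mathcal{LR}([\lambda/\mu])=\{\alpha\}$. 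If $[\lambda/\mu]\cong[\alpha]^\circ$, I would instead invoke the invariance of the whole collection of $\LR$-contents under a $180^\circ$ rotation of a skew diagram, implemented by the order-reversing complementation of tableaux; this again leaves $\alpha$ as the unique content, so the set is a singleton.

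For the converse I would argue by contraposition and produce two distinct contents whenever $[\lambda/\mu]$ is a translate of neither a straight shape nor a rotated one. First I would dispose of the disconnected case. Because each row and each column of a skew diagram is an interval of boxes, distinct connected components of $[\lambda/\mu]$ lie in pairwise disjoint sets of rows and of columns, and their $\LR$-contents combine independently; taking the dominance-largest content of each of two nonempty components, both their componentwise sum and their sorted union occur and are distinct, giving $|\mathcal{LR}([\lambda/\mu])|\geqslant2$. After a translation a connected diagram occupies consecutive rows $1,\dots,r$, and it is straight exactly when $\mu_1=\dots=\mu_r$ and rotated-straight exactly when $\lambda_1=\dots=\lambda_r$; so the remaining case is a connected diagram in which both sequences $(\mu_i)$ and $(\lambda_i)$ are non-constant.

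In this case I would exhibit two concrete elements of the support. Let $\nu^-$ be the partition whose parts are the row lengths $\lambda_i-\mu_i$ sorted into decreasing order and let $\nu^+$ be the conjugate of the partition of the column lengths; I would show that each of $\nu^\pm$ is the content of some $\LR$-tableau (equivalently, applying the conjugation symmetry $c^\lambda_{\mu,\nu}=c^{\lambda'}_{\mu',\nu'}$ to the first statement yields the second), and then that $\nu^-\neq\nu^+$ precisely when $(\mu_i)$ and $(\lambda_i)$ are both non-constant. The inequality would be extracted from a majorization comparison: under the stated hypotheses the row lengths and the column lengths give partitions whose sorted forms are not conjugate to one another.

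The step I expect to be the main obstacle is this last one, together with the precise realisation of $\nu^-$ as an $\LR$-content. The bound that every entry in the $i$-th row of an $\LR$-tableau is at most $i$ shows that all contents have at most $r$ parts, but it does not by itself pin down $\nu^-$ (for instance when $\lambda=(3,3)$ and $\mu=(2)$ the content $(2,2)$ is forbidden by column-strictness although it is dominated by the actual minimum $(3,1)$); so constructing the tableau of content $\nu^-$ and verifying $\nu^-\neq\nu^+$ requires a careful analysis of how boxes are distributed among rows and columns. An alternative, more local route is to locate a ``bend'' where simultaneously $\mu_i>\mu_{i+1}$ and $\lambda_j>\lambda_{j+1}$ and to toggle a filling there to produce two $\LR$-tableaux of different content directly; the delicate point is to make such a toggle legal in every configuration.
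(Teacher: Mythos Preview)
The paper does not supply a proof of this lemma; it is quoted verbatim from \cite[Lemma~4.4]{BK99} and \cite[Lemma~2.3]{GS18}. So there is no ``paper's proof'' to compare against, only the original sources.

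Your outline is sound in its architecture. The ``if'' direction is correct and standard: translation leaves the set of LR-fillings unchanged, and the $180^\circ$ rotation symmetry $\mathcal{LR}(D)=\mathcal{LR}(D^\circ)$ (realised, e.g., by Sch\"utzenberger complementation) reduces the second case to the first. Your disconnected case is also essentially right; the cleanest justification is that $s_{\lambda/\mu}$ factors as the product of the skew Schur functions of the components, and for any nonempty $\alpha\in\mathcal{LR}(C_1)$, $\beta\in\mathcal{LR}(C_2)$ both $c^{\alpha+\beta}_{\alpha,\beta}$ and $c^{\alpha\cup\beta}_{\alpha,\beta}$ are positive, while $\ell(\alpha+\beta)=\max(\ell(\alpha),\ell(\beta))<\ell(\alpha)+\ell(\beta)=\ell(\alpha\cup\beta)$.

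The genuine gap is exactly where you locate it. Your $\nu^{+}$ (conjugate of the sorted column lengths) is always an LR-content: it is realised by the column-superstandard filling, where box $(i,j)$ receives the label $i-\mu'_{j}$. By the conjugation symmetry $c^{\lambda}_{\mu,\nu}=c^{\lambda'}_{\mu',\nu'}$, your $\nu^{-}$ (sorted row lengths) is also always realised. Moreover one can check that $\nu^{-}$ and $\nu^{+}$ are respectively the dominance-minimum and dominance-maximum contents, so $|\mathcal{LR}([\lambda/\mu])|=1$ is \emph{equivalent} to $\nu^{-}=\nu^{+}$. The difficulty is that proving ``connected, $\mu$ non-constant, $\lambda$ non-constant $\Rightarrow\nu^{-}\neq\nu^{+}$'' directly from the shapes is not a one-liner. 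For instance, equating first parts only tells you that some row is full-width and some column is full-height, which forces $\lambda_i$ constant for $i\leqslant i_0$ and $\mu_i$ constant for $i\geqslant i_0$ for some $i_0$; one must then iterate with the next partial sums to push $i_0$ to $1$ or to $r$. This can be done, but it is the substance of the proof, not a corollary of what you have written.

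The ``local toggle'' route you mention at the end is in fact the approach taken in \cite{BK99}: once both an inner corner of $\mu$ (a place where $\mu_i>\mu_{i+1}$) and an outer corner of $\lambda$ (where $\lambda_j>\lambda_{j+1}$) exist, one modifies the column-superstandard filling at a single box adjacent to one of these corners to change the content by moving one box between two rows of the content partition, and checks the result is still an LR-tableau. That argument is short and avoids the majorisation bookkeeping entirely; if you want a self-contained proof, I would pursue that line rather than the $\nu^\pm$ comparison.
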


\begin{lemma}\cite[Lemma~2.3]{GS18}\label{l:nonstandard}
Suppose that $T$ is a skew diagram.
If $m=|T|\geqslant 4$ and $(m-1,1)\in\mathcal{LR}(T)$,
then one of the following holds:
\begin{enumerate}[$(i)$]
\item $T\cong[(m-1,1)]$ or $T\cong[(m-1,1)]^\circ$;
\item $\mathcal{LR}(T)\cap\{(m), (m-2,2), (m-2,1,1)\}\neq\varnothing$.
\end{enumerate}
\end{lemma}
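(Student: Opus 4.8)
The plan is to read the shape of $T$ off from the mere existence of one Littlewood--Richardson tableau of content $(m-1,1)$, and then to feed that shape into the skew Schur expansion whose support is $\mathcal{LR}(T)$. First I would fix an $\LR$-tableau $U$ of shape $T$ with $m-1$ entries equal to $1$ and a single entry equal to $2$. Since entries strictly increase down columns and only the values $1,2$ occur, every column of $T$ has at most two boxes, and a column of size two must be filled by a $1$ over a $2$; as only one $2$ is available, \emph{at most one} column of $T$ has size two. This already splits the argument. If $T$ is a horizontal strip (all columns of size one), then filling every box with $1$ is a valid $\LR$-tableau of content $(m)$, so $(m)\in\mathcal{LR}(T)$ and we are in case $(ii)$; note that a horizontal strip is never congruent to $[(m-1,1)]$ or $[(m-1,1)]^\circ$, so this is consistent.

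The remaining case is that $T$ has exactly one column, say column $j$, of size two, and here lies the crux. Using that both rows and columns of a skew diagram are contiguous intervals, I would show that the two boxes of column $j$ must sit in two consecutive rows which overlap in column $j$ alone, and that, because no other column is doubled, every other row of $T$ is column-wise disjoint from all the rest. Consequently $T$ is the disjoint union, in both rows and columns, of one connected two-row piece $P$ whose rows have lengths $p\geqslant q\geqslant 1$ and overlap in a single column, together with finitely many single-row pieces of sizes $r_1,r_2,\dots$.

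For the computation I would use that a disconnected skew diagram has skew Schur function equal to the product over its connected components, giving $s_T=s_P\cdot\prod_k s_{(r_k)}$. A direct inspection of $\LR$-tableaux of $P$ (the lattice condition forces the upper row to consist entirely of $1$'s, after which the filling is determined by the number of $2$'s in the lower row) yields the two-row formula $s_P=\sum_{t=1}^{q}s_{(p+q-t,\,t)}$. It then remains to locate one of the listed partitions. If $q\geqslant2$, the summand $s_{(p+q-2,2)}$ of $s_P$ is present, and pushing the remaining $m-(p+q)$ boxes into the first row by the Pieri rule produces $s_{(m-2,2)}$ (here $m\geqslant 4$ guarantees that $(m-2,2)$ is a genuine partition); hence $(m-2,2)\in\mathcal{LR}(T)$, which is case $(ii)$. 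If $q=1$ but $T$ has at least one extra single row, a single Pieri step can add one box in the second row while the remaining added boxes go into the first, again giving $(m-2,2)\in\mathcal{LR}(T)$.

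Finally, if $q=1$ and there are no extra rows, then $T=P$ is the hook $[(m-1,1)]$ when the one-box row lies below the long row and its $180^\circ$ rotation $[(m-1,1)]^\circ$ when it lies above, which is exactly case $(i)$; equivalently, this is precisely the subcase in which $|\mathcal{LR}(T)|=1$, so it also falls out of Lemma~\ref{l:unique} with $\alpha=(m-1,1)$. I expect the geometric reduction of the size-two-column case to the disjoint-union form of $T$ to be the only delicate point, everything after it being a short Pieri computation. It is worth remarking that $(m)$ and $(m-2,2)$ already exhaust case $(ii)$ under this hypothesis, so the third listed partition $(m-2,1,1)$ is not actually needed for the proof.
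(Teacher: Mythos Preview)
The paper does not prove this lemma; it is quoted without proof from \cite{GS18}, so there is nothing in the paper to compare against. Assessing your argument on its own merits:

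The overall strategy is correct, and the geometric reduction---that a single size-two column forces $T$ to be a two-row piece $P$ overlapping in one column, disjoint in both rows and columns from the remaining single-row strips---goes through as you outline. The gap is in the sub-case $q=1$ with at least one extra single row. From $s_P=s_{(p,1)}$ you claim a Pieri multiplication by $s_{(r_1)}$ can place one box in the second row and the rest in the first, producing $s_{(p+r_1-1,\,2)}$. The horizontal-strip condition $\nu_1\geqslant\lambda_2$ forces $p\geqslant2$ for this move. When $p=q=1$, so that $P$ is a vertical domino, and there is exactly one extra row of length $r_1\geqslant2$, one has
\[
s_T=s_{(1,1)}\cdot s_{(r_1)}=s_{(r_1+1,1)}+s_{(r_1,1,1)},
\]
hence $\mathcal{LR}(T)=\{(m-1,1),(m-2,1,1)\}$ and $(m-2,2)$ is genuinely absent. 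A concrete instance is $T=[(4,1,1)/(2)]$ with $m=4$: the hypothesis $(3,1)\in\mathcal{LR}(T)$ holds, $T$ is not congruent to $[(3,1)]$ or $[(3,1)]^\circ$, yet neither $(4)$ nor $(2,2)$ lies in $\mathcal{LR}(T)$.

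The repair is immediate: in this sub-case $(m-2,1,1)\in\mathcal{LR}(T)$, obtained by sending one box to a new third row and the rest to the first. Your closing remark that $(m-2,1,1)$ ``is not actually needed'' is therefore false; it is needed precisely for the vertical-domino-plus-one-row configuration. With this one-line fix, your proof is complete.
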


\subsection{Auxiliary results}

In this subsection, we prove auxiliary statements that will be used in the inductive proof of the main result.

\begin{lemma}\label{l:remove-d1}
Suppose that $\lambda$ and $\mu $ are partitions of $n\geqslant13$ such that $\lambda_1\geqslant\lambda'_1$ and $\mu$ contains $k\geqslant2$ distinct elements $1<d_1<\ldots<d_k$.
Denote by $\beta$ the partition of $m:=n-d_1$ which is obtained from $\mu$ by deleting one element equal to $d_1$.
Denote by $\alpha$ the partition of $n-d_1$ which
corresponds to the Young diagram obtained from 
$[\lambda]$ after removing $d_1-1$ times 
the first corner and then once either the first or second corner.
The following statements hold.
\begin{enumerate}[$(i)$]
\item If $\beta=(m)$ and $\alpha\in\{(m), (1^m), (m-1,1), (2,1^{m-2})\}$, then $k=2$ and $\lambda\in\{(n), (n-1,1), (n-2,2), (n-2,1,1), (m, 1^{m-1}), (m+1, 1^{m-2}), (m, 1^{m-2}),(m,2,1^{m-3}),(m-1, 1^{m-2}),(m-1,2,1^{m-3}),(m-1,3,1^{m-3}),(m-1,2,2,1^{m-4})\}$.
\item If $m$ is odd, $\beta=(m-2,2)$, and $\alpha\in\{(m-2,2),(2,2,1^{m-4})\}$, then $d_1=2$ and
$\lambda\in\{(n-2,2), (n-3,3), (n-3,2,1)\}$.
\end{enumerate}
\end{lemma}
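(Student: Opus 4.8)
The plan is to handle the two hypotheses separately: the assumption on $\beta$ pins down $\mu$ (and with it $d_1$ and $k$), while the assumption on $\alpha$ pins down $\lambda$ through a backward analysis of the prescribed corner-removal process. For the first step, recall that $\beta$ is obtained from $\mu$ by deleting a single part equal to $d_1$. In case $(i)$, $\beta=(m)$ has just one part, so $\mu$ has exactly two parts and $\mu=(m,d_1)$ with $d_1<m$; in particular $k=2$, which is already the numerical assertion of $(i)$. In case $(ii)$, $\beta=(m-2,2)$ contains a part equal to $2$, hence so does $\mu$; since every distinct part of $\mu$ exceeds $1$, the smallest one is $2$, forcing $d_1=2$ (and $\mu=(m-2,2,2)$, $k=2$). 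The remaining and substantive task in both cases is the determination of $\lambda$.

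For that I would exploit that the operation $f$ of deleting the first corner is \emph{deterministic}: the intermediate diagram $[\lambda^{*}]:=f^{\,d_1-1}([\lambda])$ is uniquely determined by $\lambda$, and $[\alpha]$ is then obtained from $[\lambda^{*}]$ by deleting either its first or its second corner. I would invert this in two stages. First, list all $\lambda^{*}$ whose first- or second-corner deletion equals the prescribed $\alpha$; because $\alpha$ is one of a handful of extremal shapes, re-adding one box at a (first- or second-) corner position yields only finitely many candidates for $\lambda^{*}$ (for example, when $\alpha=(m-2,2)$ one obtains $\lambda^{*}\in\{(m-1,2),(m-2,3)\}$). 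Second, for each such $\lambda^{*}$ recover every $\lambda$ with $f^{\,d_1-1}(\lambda)=\lambda^{*}$; since $f$ peels boxes off the longest rows, reversing it amounts to re-attaching $d_1-1$ boxes above $\lambda^{*}$ subject to the constraint that each re-attached box be the current first corner, which again leaves only finitely many $\lambda$. Collecting these over the (one or two) admissible values of $\alpha$ produces the lists.

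The decisive simplification is a size-and-width estimate controlling how tall $\lambda$, and hence $\alpha$, can be. Since $[\alpha]\subseteq[\lambda]$ and $\lambda_1\geqslant\lambda_1'$, the principal hook gives $n=|\lambda|\geqslant\lambda_1+\lambda_1'-1\geqslant 2\lambda_1'-1$, so $\lambda$ (and therefore $\alpha$) cannot have more than roughly $(n+1)/2$ rows. In case $(ii)$ only $d_1=2$ boxes are removed, so a tall target such as $(2,2,1^{m-4})$ would force $\lambda$ to already have $\geqslant m-2$ rows, whence $n\geqslant 2(m-2)-1=2m-5$; with $m=n-2\geqslant 11$ this contradicts $n=m+2$, so that target is unreachable and only $(m-2,2)$ survives. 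The same estimate applied to the tall targets $(1^m)$ and $(2,1^{m-2})$ in case $(i)$ shows they can occur only when $d_1$ is within $3$ of $m$ (for $(1^m)$ one even needs $d_1=m-1$), and then $\lambda$ is forced to be one of the listed near-hooks; for the two one-line or near-one-line targets $(m)$ and $(m-1,1)$ the enumeration is valid for every $d_1$ and yields the remaining entries of the $(i)$-list.

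The step I expect to be the main obstacle is the second stage of the inversion when $\lambda$ carries a block of several equal-length rows. There the first corner migrates within the block and successive deletions cause rows to merge, so $f^{\,d_1-1}$ is not described by simply shortening one fixed row, and one must argue that the reattachment process is genuinely exhaustive. I would handle this by bookkeeping the two largest distinct row-lengths together with their multiplicities and tracking how a single application of $f$ transforms this data, thereby confirming both that no admissible $\lambda$ is overlooked and that none outside the stated list arises; the multiplicity-one criteria of Lemma~\ref{l:unique} and Lemma~\ref{l:nonstandard} give an alternative vantage point for checking the extremal cases. Assembling the outcomes of the two stages in each case then produces the asserted partitions.
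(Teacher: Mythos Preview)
Your proposal is correct and follows essentially the same route as the paper: both pin down $\mu$ (hence $k$ and $d_1$) from the shape of $\beta$, then recover $\lambda$ by a two-stage backward analysis—first listing the diagrams $\lambda^{*}$ one step before $\alpha$, then inverting the iterated first-corner deletion—using the bound $\lambda_1\geqslant\lambda_1'$ together with $n=m+d_1$ to exclude the tall targets. Your case-(ii) enumeration $\lambda^{*}\in\{(m-1,2),(m-2,3)\}$ is in fact sharper than the paper's (which also lists $(m-2,2,1)$, a diagram whose first and second corner removals do not give $(m-2,2)$), so you would prove a slightly stronger inclusion than stated; the passing reference to Lemmas~\ref{l:unique} and~\ref{l:nonstandard} is unnecessary here but harmless.
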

\begin{proof}
Note that if $d_1\geqslant 6$,
then $m\geqslant d_2\geqslant 7$. On the other hand, if $d_1\leqslant 5$, then $m=n-d_1\geqslant 8$.
This implies that $m\geqslant 7$ in all cases.

Suppose that $\beta=(m)$. Then $k=2$ and $d_2=m$.
If $\alpha=(m)$, then before the last box was removed the diagram was equal to $[(m+1)]$ or $[(m,1)]$ (see Fig.~\ref{f:remove}). It follows that all other removals occurred in the first row.  Therefore, $\lambda\in\{(n), (n-1,1)\}$. If $\alpha=(m-1,1)$, then 
before the last removal the diagram was equal to
$[(m+1)]$, $[(m-1,2)]$, or $[(m-1,1,1)]$.
Since $m\geqslant 7$, all previous removals occurred in the first row. Therefore, we find that $\lambda\in\{(n-1,1), (n-2,2), (n-2,1,1)\}$.  If $\alpha=(1^m)$, then  
$\lambda_1'\geqslant m=d_2$ and hence $\lambda_1\geqslant d_2$. On the other hand, we know that $\lambda_1\leqslant d_1+1$. This is possible only if $m=d_2=d_1+1$, so $\lambda=(m, 1^{m-1})$. 

Finally, consider the case $\alpha=(2,1^{m-2})$.
Before the last removal, only three Young diagrams are possible: $[(3,1^{m-2})]$, $[(2,1^{m-1})]$, and $[(2,2,1^{m-3})]$ (see Fig.~\ref{f:remove}).
In the first case, we have $\lambda=(d_1+2,1^{m-2})$.
Since $\lambda_1\geqslant\lambda_1'=m-1$, we find that 
$m-3\leqslant d_1\leqslant m-1$. Therefore, we get three diagrams. In the second case,
$m\leqslant\lambda_1'\leqslant\lambda_1\leqslant d_1+1$ and 
hence $d_1=m-1$. Therefore, 
all previous removals occurred in the first row,
so $\lambda_1=m$ and $\lambda=(m,1^{m-1})$. 
Suppose that after $d_1-1$ removals we get the diagram $[(2,2,1^{m-3})]$. Then $d_1+1\geqslant \lambda_1\geqslant m-1$. Since $m-1\geqslant d_1$,
we find that either $d_1=m-1$ or $d_1=m-2$. If $d_1=m-2$, then $\lambda_1=m-1=2+(d_1-1)$, so
all previous removals occurred in the first row and
$\lambda=(m-1,2,1^{m-3})$.
If $d_1=m-1$, then either $\lambda_1=m=2+(d_1-1)$ and hence 
$\lambda=(m,2,1^{m-3})$, or $\lambda_1=m-1$ and
$\lambda\in\{(m-1,3,1^{m-3}),(m-1,2,2,1^{m-4})\}$.

\begin{figure}$
\ytableausetup{boxsize=1.3em}
\begin{ytableau}
\empty &  & \none[\ldots] &   & \bullet \\
\bullet
\end{ytableau}
\quad\begin{ytableau}
\empty &  & \none[\ldots] &   & \bullet \\
 & \bullet \\
 \bullet
\end{ytableau}
\quad
\begin{ytableau}
\empty & \bullet \\
 \\
 \none[\ldots] \\
 \\
 \bullet
\end{ytableau}
\quad
\begin{ytableau}
\empty & & \bullet \\
 & \bullet \\
 \none[\ldots] \\
 \\
 \bullet
\end{ytableau}
$\caption{Cases of diagrams before the last step}\label{f:remove}
\end{figure}

Suppose that $m$ is odd and $\beta=(m-2,2)$. Then $k=2$
and $d_1=2$. Therefore, $n=m+2$. If $\alpha=(2,2,1^{m-4})$,
then $\lambda_1\geqslant\lambda_1'\geqslant m-2$,
so $n\geqslant m+m-4$. This implies that $m\leqslant 6$; a contradiction. It remains to consider the case $\alpha=(m-2,2)$. Then before the last removal
we have only three possible Young diagrams: $[(m-1,2)]$, $[(m-2,3)]$, and $(m-2,2,1)]$. By the definition of $\alpha$, 
we find that $\lambda\in [(n-2,2)]$, $[(n-3,3)]$, and $(n-3,2,1)]$, respectively.
\end{proof}

\begin{lemma}\label{l:column}
Suppose that $n\geqslant8$ and $\sigma\in S_n$ is a permutation with cycle type $\mu\vdash n$.
Suppose that $\mu$ contains $k\geqslant 2$ distinct elements $1<d_1<d_2<\ldots<d_k$.
If $\lambda\vdash n$ and $\lambda_1\geqslant\lambda_1'$, then one of the following statements holds.
\begin{enumerate}[$(i)$]
\item $k=2$ and there exists an integer $m$ such that $\lambda=(m,1^{m-1})$, $d_1=m-1$, and $d_2=m$;
\item $n-\lambda'_1-d_1\geqslant k-1$ and only one element of $\mu$ equals $d_1$;
\item $n-\lambda'_1-d_1\geqslant k$.
\end{enumerate}
\end{lemma}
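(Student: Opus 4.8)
The plan is to reduce the statement to two elementary inequalities followed by a short case analysis. Write $q:=n-\lambda_1'$ for the number of boxes of $[\lambda]$ lying outside the first column (so that the quantity in the conclusion is $q-d_1$), and let $a$ denote the multiplicity of $d_1$ in $\mu$. The two facts I would establish first are the following. \textbf{(A)} Since the first row and first column of $[\lambda]$ meet only in the corner box, $\lambda_1+\lambda_1'-1\leqslant n$; together with $\lambda_1\geqslant\lambda_1'$ this gives $\lambda_1'\leqslant(n+1)/2$, with equality exactly when $[\lambda]$ is a hook with equal arm and leg, i.e.\ $\lambda=(m,1^{m-1})$ and $n=2m-1$. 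Hence if $\lambda\neq(m,1^{m-1})$ then $\lambda_1'\leqslant\lfloor n/2\rfloor$, so $q\geqslant\lceil n/2\rceil$. \textbf{(B)} Because $d_1<\cdots<d_k$ are distinct integers exceeding $1$, we have $d_i\geqslant d_1+(i-1)$, and summing one copy of each part yields $n\geqslant k d_1+\binom{k}{2}$; if moreover $a\geqslant2$, counting the extra copy of $d_1$ improves this to $n\geqslant(k+1)d_1+\binom{k}{2}$. These give $d_1\leqslant \frac{n}{k}-\frac{k-1}{2}$ always, and $d_1\leqslant\frac{1}{k+1}\bigl(n-\binom{k}{2}\bigr)$ when $a\geqslant2$.

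Next I would dispose of the extremal shape $\lambda=(m,1^{m-1})$ separately, since this is precisely where (A) is tight and where conclusion $(i)$ resides. Here $q=m-1$ and $n=2m-1$, and a direct inspection of the admissible pairs $(d_1,d_2)$ shows three possibilities: either $d_1=m-1$, $d_2=m$ with both multiplicities equal to $1$ (which is exactly conclusion $(i)$, as the minimal cycle sum $(m-1)+m=n$ forces the multiplicities); or $d_1$ is small enough that $q-d_1\geqslant k$, giving $(iii)$; or one sits on the boundary $q-d_1=k-1$, where the constraint $n=2m-1$ together with (B) forces $a=1$ and hence $(ii)$. The inequalities of (B) make each alternative quantitative, and the finitely many configurations with $n$ below the relevant threshold are checked directly.

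For $\lambda\neq(m,1^{m-1})$ I would combine $q\geqslant\lceil n/2\rceil$ with (B). If $a=1$, substituting $d_1\leqslant\frac{n}{k}-\frac{k-1}{2}$ gives $q-d_1\geqslant\frac{(k-2)n}{2k}+\frac{k-1}{2}$, which is at least $k-1$ for every $k\geqslant3$ in view of the lower bound on $n$ in (B), so conclusion $(ii)$ holds; the residual case $k=2$, $a=1$ is settled by the sharper parity form of (A), which still yields $q-d_1\geqslant1=k-1$. If $a\geqslant2$, substituting the improved bound $d_1\leqslant\frac{1}{k+1}\bigl(n-\binom{k}{2}\bigr)$ gives $q-d_1\geqslant\frac{(k-1)(n+k)}{2(k+1)}$, which exceeds $k$ once $n\geqslant\frac{k(k+3)}{k-1}$, hence conclusion $(iii)$; this threshold is automatically satisfied for all admissible $(n,k)$ except a short list with $k=2$ and $n\in\{8,9\}$, which I would enumerate and verify by hand.

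The main obstacle is not any single estimate but organizing the boundary so that the three alternatives are genuinely exhaustive; concretely, one must rule out the configuration $q-d_1=k-1$ with $a\geqslant2$ that is \emph{not} conclusion $(i)$ (note that conclusion $(i)$ itself has $q-d_1=0=k-2$, not $k-1$). This is exactly what the improved bound in (B) accomplishes: repeating $d_1$ forces $d_1$ to be smaller, pushing $q-d_1$ up to $\geqslant k$. The only places this mechanism can fail are the small values of $n$, which is why the hypothesis $n\geqslant8$ is imposed and why those finitely many cases must be treated individually.
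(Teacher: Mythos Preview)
Your approach is correct and rests on exactly the same two inequalities as the paper's proof: the bound $\lambda_1'\leqslant(n+1)/2$ coming from $\lambda_1+\lambda_1'-1\leqslant n$, and the estimate $n\geqslant kd_1+\binom{k}{2}$ (respectively $(k+1)d_1+\binom{k}{2}$ when $d_1$ has multiplicity $\geqslant2$), together with the split according to the multiplicity $a$ of $d_1$. The only difference is organizational: the paper argues by contradiction in each multiplicity case and lets the hook $\lambda=(m,1^{m-1})$ emerge as the unique surviving configuration when $k=2$, whereas you set up direct lower bounds on $q-d_1$ and treat the hook shape separately, which is slightly less streamlined (in particular your hook discussion tacitly focuses on $k=2$ and defers $k\geqslant3$ to the ``finitely many configurations'' clause) but mathematically equivalent.
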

\begin{proof}
Suppose that only one element of $\mu$ is equal to $d_1$. We show that either $k=2$, $\lambda=(m,1^{m-1})$, $d_1=m-1$, and $d_2=m$ or $n-\lambda'_1-d_1\geqslant k-1$.
Assume to the contrary that $n-\lambda_1'-d_1<k-1$. Then 
$n-\lambda_1-d_1\leqslant n-\lambda_1'-d_1\leqslant k-2$ and hence $2d_1\geqslant 2n-\lambda_1-\lambda_1'-2k+4\geqslant n-1-2k+4$. It follows that $d_1\geqslant\frac{n}{2}-k+\frac{3}{2}$.
If $k\geqslant 3$, then $d_k\geqslant d_1+k-1\geqslant\frac{n}{2}-1+\frac{3}{2}$,
$d_{k-1}\geqslant\frac{n}{2}-2+\frac{3}{2}$, and 
$d_{k-2}\geqslant\frac{n}{2}-3+\frac{3}{2}$.
Since $n\geqslant\sum\limits_{i=1}^k d_i$,
we find that $n\geqslant\frac{3n}{2}-6+\frac{9}{2}$ and hence 
$n\leqslant 3$; a contradiction.  Therefore, we have $k=2$. 
Then $d_1\geqslant\frac{n}{2}-2+\frac{3}{2}=\frac{n-1}{2}$
and $d_2\geqslant\frac{n+1}{2}$.
This implies that $n=2m-1$, $d_1=m-1$, and $d_2=m$.
Now $0=k-2\geqslant n-\lambda_1'-d_1=m-\lambda_1'$,
so $\lambda_1'\geqslant m$. Therefore, $\lambda_1\geqslant \lambda_1'\geqslant m$. This is possible only when $\lambda_1=\lambda_1'=m$ and $\lambda=(m,1^{m-1})$;
we arrive at a contradiction with our assumption.

Suppose that at least two elements of 
$\mu$ equal $d_1$. Assume that $n-d_1-\lambda_1'<k$. 
Then $n-d_1-\lambda_1\leqslant n-d_1-\lambda_1'\leqslant k-1$ and hence $d_1\geqslant\frac{n}{2}-k+\frac{1}{2}$. 
This implies that $d_k\geqslant\frac{n}{2}-1+\frac{1}{2}$ and
$d_{k-1}\geqslant\frac{n}{2}-2+\frac{1}{2}$.
Since $n\geqslant d_1+d_k+d_{k-1}$,
we find that $n\geqslant\frac{3n}{2}-k-3+\frac{3}{2}$.
Therefore, $n\leqslant 2k+3$. On the other hand, we know that
$d_1\geqslant 2$ and $d_i\geqslant 3$ for $i\geqslant2$, so $n\geqslant 2\cdot2+3(k-1)$. Then $4+3k-3\leqslant 2k+3$, so $k\leqslant 2$. This implies that $n\leqslant 2\cdot 2+3=7$; a contradiction.
Therefore, we infer that $n-d_1-\lambda_1'\geqslant k$, as claimed.
\end{proof}

\begin{lemma}\label{l:row}
Suppose that $n\geqslant12$
and there exist $k\geqslant 2$ distinct integers
$1<d_1<d_2<\ldots<d_k$ whose sum does not exceed $n$. Suppose that $\lambda\vdash n$ such that $\lambda_1\geqslant\lambda_1'\geqslant 2$. Denote by $\mu$ the partition of $m:=n-d_1$ which 
corresponds to the Young diagram obtained from $[\lambda]$
by removing the first corner $d_1$ times.
Then the following statements hold.
\begin{enumerate}[$(i)$]
\item If $n-\lambda_1\geqslant k$, then $m-\mu_1\geqslant k$;
\item If $n-\lambda_1<k$, then $\lambda_1-\lambda_2\geqslant d_1$ and $m-\mu_1=n-\lambda_1$.
\end{enumerate}
\end{lemma}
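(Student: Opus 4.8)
The plan is to begin by pinning down exactly what $\mu$ looks like. Removing the first corner always deletes the box at the right end of a longest row, so iterating this $d_1$ times clears the rightmost columns of $[\lambda]$ one after another, each column from the bottom upwards. Hence, writing $q$ for the number of columns that get deleted completely, one has $\mu_1=\lambda_1-q$; moreover the fully deleted columns are exactly $\mu_1+1,\dots,\lambda_1$, so every box of $[\lambda]$ sitting in a column of index at most $\mu_1-1$ survives untouched. I will extract two consequences that drive the whole argument: $(a)$ $m-\mu_1=(n-\lambda_1)-B$, where $B=d_1-q$ counts the deleted boxes lying below the first row; and $(b)$ $\mu_r\geqslant\min(\lambda_r,\mu_1-1)$ for every $r\geqslant2$. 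I will also use one purely arithmetic bound throughout: since $\sum_{i=1}^k d_i\leqslant n$ and $d_i\geqslant d_1+(i-1)\geqslant i+1$,
$$m=n-d_1\geqslant\sum_{i=2}^{k}d_i\geqslant\sum_{j=3}^{k+1}j=\frac{(k+1)(k+2)}{2}-3.\qquad(\star)$$

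For part $(ii)$ I assume $n-\lambda_1\leqslant k-1$. Then $\lambda_2\leqslant\sum_{r\geqslant2}\lambda_r=n-\lambda_1\leqslant k-1$ while $\lambda_1=n-(n-\lambda_1)\geqslant n-k+1$. A short computation shows $(\star)$ gives $m\geqslant2(k-1)$ (equivalently $\frac{(k+1)(k+2)}{2}-3\geqslant 2(k-1)$, i.e.\ $k^2\geqslant k$), that is $n-2k+2\geqslant d_1$. Combining, $\lambda_1-\lambda_2\geqslant(n-k+1)-(k-1)=n-2k+2\geqslant d_1$. Once $\lambda_1-\lambda_2\geqslant d_1$, the rightmost $d_1$ columns of $[\lambda]$ all have height $1$, so clearing them merely shortens the first row; thus $q=d_1$, $B=0$, and by $(a)$ we get $m-\mu_1=n-\lambda_1$, exactly as claimed.

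For part $(i)$ I argue by contradiction, and this is where the real work lies. Suppose $n-\lambda_1\geqslant k$ but $m-\mu_1\leqslant k-1$. First, if $d_1\leqslant\lambda_1-\lambda_2$ then (as above) only the first row is shortened, so $m-\mu_1=n-\lambda_1\geqslant k$, a contradiction; hence $d_1>\lambda_1-\lambda_2$. In that case all $\lambda_1-\lambda_2$ height-one columns are cleared, so $q\geqslant\lambda_1-\lambda_2$ and $\mu_1=\lambda_1-q\leqslant\lambda_2$. Now $(b)$ with $r=2$ yields $\mu_2\geqslant\min(\lambda_2,\mu_1-1)=\mu_1-1$ (the minimum being $\mu_1-1$ precisely because $\mu_1\leqslant\lambda_2$), whence $m-\mu_1\geqslant\mu_2\geqslant\mu_1-1$. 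Together with $m-\mu_1\leqslant k-1$ this forces $\mu_1\leqslant k$, and therefore $m=\mu_1+(m-\mu_1)\leqslant 2k-1$. But $(\star)$ then gives $\frac{(k+1)(k+2)}{2}-3\leqslant 2k-1$, i.e.\ $(k-2)(k+1)\leqslant0$, so $k=2$; in that case $m=3$, $d_1=n-3\geqslant9$, while $d_2\leqslant\sum_{i\geqslant2}d_i\leqslant m=3<d_1<d_2$, a contradiction.

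The hard part is part $(i)$: the obvious estimates $m-\mu_1\geqslant m-\lambda_1$ and $m-\mu_1\geqslant m-\lambda_2$ are both far too lossy, since they ignore that clearing a tall column removes boxes from the first and from the lower rows in tandem. The decisive input is observation $(b)$---that after the height-one columns are gone the second row still reaches column $\mu_1-1$, giving $m-\mu_1\geqslant\mu_1-1$---which bounds $m$ \emph{linearly} in $k$ and so collides with the \emph{quadratic} lower bound $(\star)$ coming from having $k$ distinct cycle lengths. The hypotheses $n\geqslant12$ and $\lambda_1'\geqslant2$ are needed only to dispose of the leftover case $k=2$ and to guarantee a genuine second row for $(b)$ to speak about.
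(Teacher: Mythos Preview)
Your argument is correct, and in fact cleaner than the paper's. Both proofs dispatch the easy case $\lambda_1-\lambda_2\geqslant d_1$ identically, and both reduce part~$(ii)$ to showing this inequality must hold when $n-\lambda_1<k$. The real divergence is in part~$(i)$ under the assumption $d_1>\lambda_1-\lambda_2$. The paper argues via $m-\mu_1\geqslant m-\lambda_2=n-d_1-\lambda_2$ and then pushes an arithmetic contradiction from $n-d_1-\lambda_2<k$; this route forces a separate treatment of the three borderline shapes $(\lambda,d_1)\in\{((t,t),t-1),((t,t,1),t),((t+1,t),t)\}$, where the bound $m-\lambda_2\geqslant k$ genuinely fails and one must compute $\mu_1$ by hand. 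Your key observation $(b)$, that once $\mu_1\leqslant\lambda_2$ the second row of $\mu$ still reaches column $\mu_1-1$, gives $m-\mu_1\geqslant\mu_2\geqslant\mu_1-1$, which together with $m-\mu_1\leqslant k-1$ yields the \emph{linear} bound $m\leqslant 2k-1$; this collides directly with the quadratic lower bound $(\star)$ and eliminates the need for any case analysis on $\lambda$. The trade-off is that you lean on a slightly finer structural fact about the removal process (that columns $1,\dots,\mu_1-1$ are untouched), whereas the paper never looks below the first two rows of $\lambda$; but your version is shorter and, arguably, more transparent about why the inequality holds.
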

\begin{proof}
If $\lambda_1-\lambda_2\geqslant d_1$, then all $d_1$ boxes were removed in the first row, so $\mu_1=\lambda_1-d_1$ and hence $n-\lambda_1=m-\mu_1$. 

Now we suppose that $\lambda_1-\lambda_2\leqslant d_1-1$.
First, we show that $n-\lambda_1\geqslant k$.
Assume to the contrary that $n-\lambda_1\leqslant k-1$.
Then $\lambda_2\leqslant n-\lambda_1\leqslant k-1$.
Since $d_1\geqslant \lambda_1-\lambda_2+1$,
we find that $d_1\geqslant (n-k+1)+(1-k)+1=n-2k+3$.
Then $d_2\geqslant n-2k+4,\ldots, d_k\geqslant n-2k+k+2$.
So $$n\geqslant d_1+\ldots+d_k\geqslant nk-2k^2+\frac{(k+2)(k+3)}{2}-3.$$ 
This implies that $4k^2+6-(k+2)(k+3)\geqslant n(2k-2)$
and hence $3k^2-5k\geqslant n(2k-2)$. 
Since $n\geqslant d_1+\ldots+d_k$,
we find that $n\geqslant 2+3(k-1)=3k-1$.
Therefore,  we see that $3k^2-5k\geqslant (3k-1)(2k-2)$ and hence $3k-2\geqslant 3k^2\geqslant 6k$; a contradiction.

Therefore, it is true that $n-\lambda_1\geqslant k$.
We shall show that $m-\mu_1\geqslant k$.
First, assume that $k=2$ and $$(\lambda,d_1)\in\{((t,t),t-1), ((t,t,1),t), ((t+1,t),t)\},$$ where $t\in\mathbb{N}$. 
Since $n\geqslant 12$, we find that $t\geqslant 6$
and $d_1\geqslant 5$.
Now $m-\mu_1=n-d_1-\mu_1\geqslant 2t-t-\mu_1=t-\mu_1$.
Therefore, $m-\mu_1\geqslant 2$ if $\mu_1\leqslant t-2$.
It is easy to see that if we remove the first corner five times, then the first row contains at most $t-2$ boxes. So $\mu_1\leqslant t-2$, as required.

Now we assume that if $k=2$, then $$(\lambda,d_1)\not\in\{((t,t),t-1), ((t,t,1),t), ((t+1,t),t)\}.$$ 
Since $\lambda_1-\lambda_2$ boxes were removed in the first row, we infer that $\mu_1\leqslant\lambda_2$.
Therefore, we get that $m-\mu_1=n-d_1-\mu_1\geqslant n-d_1-\lambda_2$.
So it suffices to show that $n-d_1-\lambda_2\geqslant k$.
Assume to the contrary that $n-d_1-\lambda_2\leqslant k-1$.
Then $\lambda_2\geqslant n-d_1-k+1$ and hence 
$\lambda_1\geqslant n-d_1-k+1$. This implies that 
$n\geqslant\lambda_1+\lambda_2\geqslant 2n-2d_1-2k+2$.
So $d_1\geqslant\frac{n}{2}-k+1$. If $k\geqslant3$,
then $n\geqslant d_k+d_{k-1}+d_{k-2}\geqslant\frac{n}{2}+(\frac{n}{2}-1)+(\frac{n}{2}-2)=\frac{3n}{2}-3$ and hence $n\leqslant 6$; a contradiction. Therefore, we get that $k=2$. Then $d_1\geqslant\frac{n}{2}-1$ and $d_2\geqslant\frac{n}{2}$. If $n=2t+1$, then 
$d_1=t$ and $d_2=t+1$. On the other hand, $\lambda_2\geqslant n-d_1-k+1=2t+1-t-2+1=t$. This is possible only if $\lambda=(t,t,1)$ or  
$\lambda=(t+1,t)$.  If $n=2t$, then $d_1\geqslant t-1$, so $d_1=t-1$. Then 
$\lambda_2\geqslant n-d_1-k+1=2t-(t-1)-2+1=t$ and hence $\lambda=(t,t)$.
This contradicts our assumption on $(\lambda,d_1)$ for $k=2$. The lemma is proved.
\end{proof}

\subsection{Particular cases}
In this subsection, we prove results that are devoted to special cases of the main theorem. Recall that our description of the minimal polynomials for a given permutation depends on the number of different lengths of nontrivial cycles in the cyclic decomposition. The case when all non-trivial cycles have the same length has already been studied.
\begin{theorem}\cite[Theorem~1.1]{YS21}\label{th:1}
Let $n$, $r$, and $m$ be natural numbers such that $n\geqslant3$, $r\geqslant2$, and $rm\leqslant n$. Suppose that $\sigma\in S_n$ is a product of $m$ independent cycles of length $r$ and $\rho:S_n\rightarrow GL(V)$ is a nontrivial irreducible representation of $S_n$ over $\mathbb{C}$ corresponding to $\lambda\vdash n$. Denote the minimal polynomial of $\rho(\sigma)$ by $p_\lambda^\mu(x)$.
Then $p_\lambda^\mu(x)\neq x^r-1$ if and only if one of the following statements holds.
\begin{enumerate}[$(i)$]
\item We have $r=n$, $m=1$, and $\lambda=(n-1,1)$; in this case $p_\lambda^\mu(x)=~\frac{x^n-1}{x-1}$.
\item We have $r=n$, $m=1$, and $\lambda=(2,1^{n-2})$; in this case $p_\lambda^\mu(x)=\frac{x^n-1}{x+(-1)^n}$.
\item We have $r=n=6$, $m=1$, and $\lambda=(3,3)$; in this
case $p_\lambda^\mu(x)=\frac{x^6-1}{x^2+x+1}$.
\item We have $r=n=6$, $m=1$, and $\lambda=(2,2,2)$; in this
case $p_\lambda^\mu(x)=\frac{x^6-1}{x^2-x+1}$.
\item We have $n=4$, $\lambda=(2,2)$; and $(r,m,p_\lambda^\mu(x))\in\{(4,1,x^2-1),(3,1,x^2+x+1),(2,2,x-1)\}$.
\item $\lambda=(1^n)$ and $p_\mu^\lambda(x)=x-\operatorname{sgn}(\sigma)$.

\end{enumerate}
\end{theorem}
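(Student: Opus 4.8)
The plan is to reduce the statement about the minimal polynomial to a statement about the spectrum $\Sp_\lambda(\sigma)$, and then to peel off a single $r$-cycle by means of Lemma~\ref{l:Spec}. Since $\sigma^r=1$, the matrix $\rho(\sigma)$ is diagonalizable over $\mathbb{C}$ and all of its eigenvalues lie in the set $U_r=\{z\in\mathbb{C}\mid z^r=1\}$; hence $p^\mu_\lambda(x)=\prod_{\zeta\in\Sp_\lambda(\sigma)}(x-\zeta)$, and $p^\mu_\lambda(x)=x^r-1$ holds if and only if $\Sp_\lambda(\sigma)=U_r$. Thus the whole theorem amounts to deciding, for each pair $(\lambda,\sigma)$, whether every $r$-th root of unity occurs as an eigenvalue, and to computing the omitted roots otherwise.

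First I would settle the base case $m=1$ of a single full $r$-cycle $c_r$ in $S_r$ (so $n=r$). Here the only $\nu\vdash r$ contained in $\lambda$ is $\lambda$ itself, so Lemma~\ref{l:Spec} gives no room and a direct computation is needed. I would read off the multiplicity of $\zeta=e^{2\pi ij/r}$ from $m_j=\frac1r\sum_{k=0}^{r-1}\zeta^{-jk}\chi^\lambda(\sigma^k)$, evaluating $\chi^\lambda(\sigma^k)$ by the Murnaghan--Nakayama rule: since $\sigma^k$ has cycle type consisting of $d=\gcd(k,r)$ cycles of length $r/d$, the value $\chi^\lambda(\sigma^k)$ is a signed count of tilings of $[\lambda]$ by ribbons of length $r/d$, and in particular vanishes unless $\lambda$ is hook-like. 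For $r$ prime only $d\in\{1,r\}$ occur, so $\chi^\lambda(\sigma^k)=\chi^\lambda(c_r)$ for all $k\not\equiv 0$, giving $m_j=\frac1r(\dim\lambda-\chi^\lambda(c_r))$ for $\zeta\neq1$ and $m_0=\frac1r(\dim\lambda+(r-1)\chi^\lambda(c_r))$. When $\lambda$ is not a hook this is $\dim\lambda/r>0$ uniformly, and a short analysis of hooks isolates $(r-1,1)$ and $(2,1^{r-2})$ as the only nontrivial shapes omitting an eigenvalue, yielding $(i)$ and $(ii)$. For composite $r$ the intermediate divisors $d$ contribute, and this is where the sporadic shapes at $r=4$ and $r=6$ in $(iii)$--$(v)$ must be found by explicit ribbon bookkeeping.

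For $m\geq2$ (so $r\leq n/2$) I would write $\sigma=c_r\sigma'$ with $c_r\in S_r$ a single $r$-cycle and $\sigma'\in S_{n-r}$ a product of $m-1$ $r$-cycles, and apply Lemma~\ref{l:Spec} to get $\Sp_\lambda(\sigma)=\bigcup\Sp_\nu(c_r)\cdot\Sp_\gamma(\sigma')$ over pairs with $c^\lambda_{\nu,\gamma}\neq0$. The key observation is multiplicative: whenever there is a \emph{good} $\nu\vdash r$ with $\nu\subseteq\lambda$ and $\Sp_\nu(c_r)=U_r$ (which the base case shows holds for every nontrivial $\nu$ outside the short exceptional list), one picks any $\gamma$ with $c^\lambda_{\nu,\gamma}\neq0$ and any $\eta\in\Sp_\gamma(\sigma')\subseteq U_r$, and then $U_r\cdot\eta=U_r\subseteq\Sp_\lambda(\sigma)\subseteq U_r$ forces $p^\mu_\lambda(x)=x^r-1$. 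Hence $p^\mu_\lambda(x)\neq x^r-1$ can occur only when every $\nu\vdash r$ with $\nu\subseteq\lambda$ is exceptional. A combinatorial lemma --- that a partition which is neither a hook nor very small contains a non-hook $\nu\vdash r$, while a hook has only hook subshapes and already contains the good deep hook $(r-2,1^2)$ once $r\geq5$ --- then forces $\lambda$ into a short list: the two extreme hooks $(n-1,1)$ and $(2,1^{n-2})$, the sign representation $(1^n)$, and the small shapes at $n=4,6$.

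It remains to treat these finitely many families directly. The sign representation gives $\rho(\sigma)=\sgn(\sigma)\,I$ and hence $(vi)$; the conjugate pair $(n-1,1),(2,1^{n-2})$ is handled together using $\Sp_{\lambda'}(\sigma)=\sgn(\sigma)\,\Sp_\lambda(\sigma)$. The genuinely delicate point --- and the part I expect to be the main obstacle --- is to check for the near-hook shapes with $m\geq2$ that the union in Lemma~\ref{l:Spec} actually rebuilds all of $U_r$, so that these are \emph{not} exceptions once $m\geq2$; this requires tracking the small Littlewood--Richardson pieces and, in particular, deciding whether the eigenvalue $1$ survives in the relevant $\Sp_\gamma(\sigma')$, whose multiplicity in the natural-type constituents can drop to zero precisely in boundary cases such as $n=rm$. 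Alongside this, the explicit determination of the omitted eigenvalues in the sporadic cases $\lambda=(2,2)$ at $n=4$ and $\lambda=(3,3),(2^3)$ at $n=6$ closes the argument and produces the polynomials recorded in $(iii)$--$(v)$.
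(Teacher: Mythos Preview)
This theorem is not proved in the present paper: it is quoted verbatim from \cite{YS21} as an input to the proof of Theorem~\ref{t:main}, so there is no ``paper's own proof'' to compare against. That said, your strategy---peel off one $r$-cycle via Lemma~\ref{l:Spec} and reduce to the spectrum of a single $r$-cycle on $S_r$---is exactly the mechanism the paper uses for its more general Theorem~\ref{t:main}, and it is also the approach of the original source, so in spirit you are on the right track.

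There is, however, a genuine gap in your base case. You treat the single full cycle $c_r$ on $S_r$ by Murnaghan--Nakayama and say that for prime $r$ a short hook analysis isolates $(r-1,1)$ and $(2,1^{r-2})$, while for composite $r$ ``explicit ribbon bookkeeping'' finds the sporadic shapes at $r=4,6$. But the claim you must establish is that for \emph{every} composite $r\geqslant 7$ and every nontrivial $\lambda\vdash r$ outside the two near-trivial hooks, the spectrum is all of $U_r$. For non-hook $\lambda$ one has $\chi^\lambda(c_r)=0$, but the powers $c_r^k$ with $\gcd(k,r)=d>1$ have cycle type $(r/d)^d$, and $\chi^\lambda$ need not vanish there; your multiplicity formula $m_j$ then involves a genuinely nontrivial alternating sum over the divisors of $r$, and you give no argument that it is positive for all $j$ and all such $\lambda$. ``Ribbon bookkeeping'' is a finite check only for each fixed $r$, not a proof for all $r$. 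In \cite{YS21} this is handled by a separate induction (on $r$, using a further application of the Littlewood--Richardson rule to the power $c_r^k$ itself), not by a direct character computation; your sketch is missing that mechanism or an equivalent one.

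A second, smaller point: your inductive step assumes $m\geqslant 2$, but the theorem also covers $m=1$ with $r<n$ (an $r$-cycle with fixed points). You should say explicitly that this case is handled by the same Lemma~\ref{l:Spec} with $\sigma'=1\in S_{n-r}$, so that $\Sp_\lambda(\sigma)=\bigcup_{\nu\subseteq\lambda,\ \nu\vdash r}\Sp_\nu(c_r)$, and then the ``good $\nu$'' argument applies verbatim.
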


The following statement may already be known to experts and can be proved in different ways (e.g., using \cite[Theorem~3.3.]{ST89}). 
We present this statement because we believe it sheds light on the nature of the main result, and we also provide a proof for it that is consistent with the spirit of the present paper.
\begin{lemma}\label{l:standard}
Suppose that $n\geqslant 2$ and $\rho:S_n\rightarrow GL(V)$ is the standard representation of $S_n$, i.e. it corresponds to the partition $(n-1,1)$.
Take a permutation $\sigma$ which is the product of independent cycles of lengths $n_1\ldots,n_k$ and denote by $V_\eta(\sigma)=\operatorname{ker}(\rho(\sigma)-\eta\cdot\operatorname{Id_V})$ the eigenspace for 
$\eta$. Then 
\begin{enumerate}[$(i)$]
\item $\dim V_1(\sigma)=k-1$; in particular it is zero iff $\sigma$ is an $n$-cycle;
\item if $k>1$, then 
$\Sp_{(n-1,1)}(\sigma)=\bigcup\limits_{i=1}^k\{\eta\in\mathbb{C}~|~\eta^{n_i}=1\}.$
\item if $k=1$, then 
$\Sp_{(n-1,1)}(\sigma)=\{\eta\in\mathbb{C}\setminus\{1\}~|~\eta^{n_i}=1\}.$
\item if $\eta\in\Sp_{(n-1,1)}(\sigma)\setminus\{1\}$,
then $\dim V_\eta(\sigma)$ equals the number of $n_i$ such that $\eta^{n_i}=1$.
\end{enumerate}
\end{lemma}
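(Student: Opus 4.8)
The plan is to realize the representation $\rho$ concretely inside the natural permutation module and read off the eigenvalues cycle by cycle. Let $M=\mathbb{C}^n$ carry the permutation action $\pi\cdot e_i=e_{\pi(i)}$, and decompose $M=U\oplus W$, where $U=\mathbb{C}(e_1+\cdots+e_n)$ affords the trivial representation $\chi^{(n)}$ and $W=\{v\in M:\sum_i v_i=0\}$ affords the standard representation $\chi^{(n-1,1)}$. Thus it suffices to understand the action of $\sigma$ on $M$ and then delete the single trivial summand $U$.

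First I would compute the eigenvalues of $\sigma$ on $M$. Writing the cycle decomposition of $\sigma$, the module $M$ splits as $\bigoplus_{j=1}^k M_j$, where $M_j$ is spanned by the basis vectors indexed by the points of the $j$-th cycle (of length $n_j$). On $M_j$ the matrix of $\sigma$ is a cyclic shift of size $n_j$, whose eigenvalues are exactly the $n_j$-th roots of unity, each with multiplicity one; an eigenvector for $\zeta$ with $\zeta^{n_j}=1$ is $\sum_\ell \zeta^{-\ell}e_{i_\ell}$, where $i_1,\ldots,i_{n_j}$ are the points of the cycle. Consequently, for any $\eta\in\mathbb{C}$ the multiplicity of $\eta$ as an eigenvalue of $\sigma$ on $M$ equals the number of indices $j$ with $\eta^{n_j}=1$; in particular $\eta=1$ occurs with multiplicity exactly $k$.

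Next I would pass from $M$ to $W$. Since $U$ is the one-dimensional trivial summand, on which $\sigma$ acts with eigenvalue $1$, the multiplicity of any $\eta\neq 1$ in $W$ agrees with its multiplicity in $M$, while the multiplicity of $1$ in $W$ is one less than in $M$, namely $k-1$. This yields $(i)$ and $(iv)$ at once: $\dim V_1(\sigma)=k-1$, which vanishes precisely when $k=1$, that is, when $\sigma$ is an $n$-cycle; and for $\eta\neq 1$ the dimension $\dim V_\eta(\sigma)$ equals the number of $n_i$ with $\eta^{n_i}=1$. The spectral descriptions $(ii)$ and $(iii)$ then follow by inspecting whether $1$ survives: if $k>1$ then $1$ still has positive multiplicity $k-1$, so $\Sp_{(n-1,1)}(\sigma)$ is the full union $\bigcup_{i=1}^k\{\eta\in\mathbb{C}:\eta^{n_i}=1\}$ (which automatically contains $1$); if $k=1$ then $1$ is removed and the spectrum is $\{\eta\in\mathbb{C}:\eta^n=1\}\setminus\{1\}$.

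I do not expect a genuine obstacle here; the only point requiring care is the bookkeeping of the eigenvalue $1$ when deleting the trivial summand, together with the resulting dichotomy between $k=1$ and $k>1$. An alternative route consistent with the inductive method of the paper would split off one cycle via Lemma~\ref{l:Spec}, determine the two relevant Littlewood--Richardson coefficients $c^{(n-1,1)}_{\nu,\gamma}$, and induct on $k$; but the direct permutation-module computation is shorter and makes the multiplicity statement $(iv)$ transparent.
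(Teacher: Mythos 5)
Your proof is correct, and it takes a genuinely different route from the paper. You realize $\chi^{(n-1,1)}$ concretely as the sum-zero hyperplane $W$ in the permutation module $M=\mathbb{C}^n$, split $M$ along the cycles of $\sigma$ into cyclic-shift blocks, and read off all eigenvalue multiplicities at once: each cycle of length $n_j$ contributes each $n_j$-th root of unity exactly once, so deleting the trivial summand $U$ only lowers the multiplicity of the eigenvalue $1$ from $k$ to $k-1$. The paper instead argues by induction on the number of cycles: the base case of an $n$-cycle is delegated to Theorem~\ref{th:1} (the minimal polynomials of cycles from~\cite{YS21}), and the inductive step splits off one cycle, computes the two or three relevant Littlewood--Richardson coefficients for $\lambda=(n-1,1)$, and applies Lemma~\ref{l:Spec} --- precisely the alternative you sketch in your final remark. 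What your approach buys is self-containedness and transparency: it needs no external input, and parts $(i)$ and $(iv)$ (the multiplicity statements) fall out immediately, whereas in the inductive argument they must be tracked through the recursion. What the paper's approach buys is methodological uniformity: it rehearses, in the simplest nontrivial case, exactly the restriction-and-induction machinery (Lemma~\ref{l:Spec} plus control of $\mathcal{LR}([\lambda/\nu])$) that drives the proof of Theorem~\ref{t:main}, which is the author's stated reason for proving the lemma this way. One small point of care in your write-up, which you handle correctly: fixed points must be counted as cycles of length $1$, so that the eigenvalue $1$ has multiplicity exactly $k$ in $M$; this is what makes $\dim V_1(\sigma)=k-1$ come out right.
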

\begin{proof}
The assertion is clear if $\sigma=1$. So we can assume that $\sigma\neq1$.

First, we consider the case when $\sigma$ is an $n$-cycle.
If $n\geqslant 3$, then $p_{(n-1,1)}^{(n)}(x)=(x^n-1)/(x-1)$  by Theorem~\ref{th:1}. If $n=2$, then $\rho$ coincides with the sign representation and  $p_{(1,1)}^{(2)}(x)=x+1=(x^2-1)/(x-1)$. So the assertion is true in this case
(this can also be verified by finding $\rho(\sigma)$ and its characteristic polynomial explicitly).

Suppose that $\sigma$ is a product of $k\geqslant 2$ cycles.
Then we can assume that $\sigma\in S_m\times S_{n-m}\leqslant S_n$, where $n>m\geqslant 2$. Write $\sigma=\pi\tau$, where $\pi\in S_m$ is a cycle of length $m$ and $\tau$ is the product of other cycles. Clearly, 
there exist only two partitions $\nu\vdash m$ such that
$\nu\subseteq (n-1,1)$: $(m-1,1)$ and $(m)$.
In the first case, we have $c^\lambda_{\nu,\gamma}\neq0$ only when $\gamma=(n-m)$, while in the second 
$c^\lambda_{\nu,\gamma}\neq0$ for $\gamma=(n-m)$ and $\gamma=(n-m-1,1)$ if $n-m\neq1$.
\begin{center}
$
\ytableausetup{boxsize=1.2em}
\begin{ytableau}
\bullet & \none[\ldots] & \bullet & 1 & \none[\ldots] & 1 \\
 \bullet 
\end{ytableau}\quad
\begin{ytableau}
\bullet & \none[\ldots] & \bullet & 1 & \none[\ldots] & 1 \\
1 
\end{ytableau}
\quad
\begin{ytableau}
\bullet & \none[\ldots] & \bullet & 1 & \none[\ldots] & 1 \\
2 
\end{ytableau}
$
\end{center}
By the Littlewood--Richardson rule, we find that 
if $n-m>1$, then  
$$\chi^\lambda\downarrow^{S_n}_{S_m\times S_{n-m}}=\chi^{(m-1,1)}\chi^{(n-m)}+\chi^{(m)}\chi^{(n-m)}+\chi^{(m)}\chi^{(n-m-1,1)},$$ 
and if $n-m=1$, then  
$\chi^\lambda\downarrow^{S_n}_{S_m\times S_{n-m}}=\chi^{(m-1,1)}\chi^{(n-m)}+\chi^{(m)}\chi^{(n-m)}$.
Therefore, we can classify the eigenvalues of $\rho(\sigma)$
in the following way: the eigenvalues of 
$\pi$ in the standard representation of $S_m$, the eigenvalue 1 with multiplicity one, which corresponds to the summand $\chi^{(m)}\chi^{(n-m)}$, and, if $n-m>1$, eigenvalues of $\tau$ in the standard representation of $S_{n-m}$.
Now the result follows by induction.
\end{proof}

The following statement is devoted to some special cases of Theorem ~\ref{t:main}.

\begin{lemma}\label{l:except}
Suppose that $\sigma\in S_n$ has cycle type $\mu$ and $\lambda\vdash n$. Then the following statements hold.
\begin{enumerate}[$(i)$]
\item If $\lambda=(n-2,2)$, $\mu=(n-2,2)$, and
$n\geqslant7$ is odd, then
$p_\lambda^\mu(x)=(x^{o(\sigma)}-1)/(x+1)$.
\item If $\lambda=(n-2,2)$, $\mu=(m,n-m)$,
where $n\geqslant 7$, $m>n-m\geqslant2$ and either $n-m>2$ or $n$ is even, 
then $p_\lambda^\mu(x)=x^{o(\sigma)}-1$.
\item If $\lambda=(n-2,1,1)$, 
$\mu=(m,n-m)$, where $n\geqslant 8$ and $m>n-m\geqslant2$,
then $p_\lambda^\mu(x)=x^{o(\sigma)}-1$.
\item If $\lambda\in\{(m, 1^{m-1})$, $(m+1, 1^{m-2})$, 
$(m, 1^{m-2})$, $(m,2,1^{m-3})$, $(m-1,1^{m-2})$, $(m-1,2,1^{m-3})$,$(m-1,3,1^{m-3})$, $(m-1,2,2,1^{m-4})\}$, 
$\mu=(m,n-m)$, where $n\geqslant 13$ and $m>n-m\geqslant2$,
then $p_\lambda^\mu(x)=x^{o(\sigma)}-1$.
\item If $\lambda\in\{(n-2, 2),(n-3,3), (n-3, 2, 1)\}$
and $\mu=(n-4,2,2)$, where $n$ is odd and $n\geqslant7$,
then $p_\lambda^\mu(x)=x^{o(\sigma)}-1$.
\end{enumerate}
\end{lemma}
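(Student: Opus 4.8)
The common engine is Lemma~\ref{l:Spec} and its evident iterate: if $\sigma$ has cycle type $\mu$ and we group its cycles into a Young subgroup $S_{m_1}\times\cdots\times S_{m_s}\leqslant S_n$, writing $\sigma=\pi_1\cdots\pi_s$ with $\pi_j$ the product of the cycles in the $j$-th block, then
\[
\Sp_\lambda(\mu)=\bigcup\,\Sp_{\nu^{(1)}}(\pi_1)\cdots\Sp_{\nu^{(s)}}(\pi_s),
\]
the union running over all tuples with nonzero iterated Littlewood--Richardson coefficient and the product being ordinary multiplication in $\mathbb{C}$. Since $\rho(\sigma)$ has finite order dividing $o(\sigma)$, it is diagonalizable and $p_\lambda^\mu(x)\mid x^{o(\sigma)}-1$; hence to prove an identity $p_\lambda^\mu(x)=x^{o(\sigma)}-1$ it suffices to show that \emph{every} $o(\sigma)$-th root of unity lies in $\Sp_\lambda(\mu)$. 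I will use three facts. First, for an $\ell$-cycle $\pi$ and $\theta\vdash\ell$, Theorem~\ref{th:1} and Lemma~\ref{l:standard} give $\Sp_\theta(\pi)=\{\eta:\eta^\ell=1\}$ when $\theta$ is generic, $\{\eta:\eta^\ell=1,\ \eta\neq1\}$ when $\theta=(\ell-1,1)$, $\{\eta:\eta^\ell=1\}\setminus\{(-1)^{\ell-1}\}$ when $\theta=(2,1^{\ell-2})$, and a single point for the trivial and sign characters, the only further exceptions occurring for $\ell\in\{4,6\}$. Second, the set of products of all $a$-th roots of unity by all $b$-th roots of unity is exactly the set of all $\operatorname{lcm}(a,b)$-th roots of unity. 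Third, when $k$ is odd, the union of the $k$-th roots of unity and their negatives is the full set of $2k$-th roots of unity.

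For items $(ii)$ and $(iii)$ the cycle type is $\mu=(m,n-m)$, so I embed $\sigma=\pi_1\pi_2$ in $S_m\times S_{n-m}$. Since $(n-2,2)$ has two rows and $(n-2,1,1)$ is a hook, each shape has only a handful of subpartitions, and the Littlewood--Richardson rule lists all admissible pairs $(\nu,\gamma)$ at once. In each case I would select a few pairs whose spectral products realize, respectively, the value $1$, the nontrivial $m$-th roots, the nontrivial $(n-m)$-th roots, and the cross products of the latter two; for $(n-2,2)$ one may take trivial/standard pieces throughout, while for the hook $(n-2,1,1)$ one replaces the all-trivial pair (whose coefficient is $0$ because of the length-two first column) by the fully generic sub-hook $(m-2,1,1)$ paired with the trivial $\gamma=(n-m)$. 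Their union fills out all of $\{\eta:\eta^m=1\}\cdot\{\eta:\eta^{n-m}=1\}$, which by the second fact is the full set of $o(\sigma)$-th roots of unity. For item $(iv)$ the bound $n\geqslant13$ with $m>n-m\geqslant2$ forces $m\geqslant7$ and $n-m\geqslant5$, leaving room to split $\lambda$ into two \emph{generic} pieces: for each of the eight near-column shapes I would exhibit a single pair $(\nu,\gamma)$ with $c^\lambda_{\nu,\gamma}\neq0$, $\nu\vdash m$ and $\gamma\vdash n-m$ both generic (for a hook such as $\lambda=(m,1^{m-1})$ one takes $\nu=(a,1^{m-a})$ with $4\leqslant a\leqslant m-2$ and $\gamma$ the complementary hook; the four non-hook shapes are only wider and easier to split). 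That single term already contributes every $\operatorname{lcm}(m,n-m)$-th root of unity, and the size bounds guarantee that neither factor meets the $\ell\in\{4,6\}$ exceptions, the forced hook contents moreover avoiding $(3,3)$ and $(2,2,2)$.

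Item $(i)$ is the one case here whose answer is not $x^{o(\sigma)}-1$, and both inclusions must be controlled. With $\mu=(n-2,2)$ and $n$ odd, $o(\sigma)=2(n-2)$, and I restrict to $S_{n-2}\times S_2$, where the transposition contributes $\{1\}$ in the trivial and $\{-1\}$ in the sign character. The only subpartitions of $(n-2,2)$ are $(n-2)$, $(n-3,1)$, $(n-4,2)$, so there are exactly four nonzero Littlewood--Richardson terms; computing the corresponding products shows their union is the set of all $2(n-2)$-th roots of unity with $-1$ deleted. The decisive point is to verify that $-1$ occurs in \emph{none} of the four terms: since $n-2$ is odd, $-1$ is not an $(n-2)$-th root of unity (so it is absent from the three terms whose $S_2$-factor is $\{1\}$), and the single term carrying the factor $-1$ pairs it with the \emph{nontrivial} standard roots $\{\eta\neq1:\eta^{n-2}=1\}$, whence $-1=(-1)\cdot1$ never arises. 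This yields $p_\lambda^\mu(x)=(x^{2(n-2)}-1)/(x+1)$.

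Item $(v)$ is the subtlest and, by contrast with $(i)$, does give the full $x^{o(\sigma)}-1$. Here $\mu=(n-4,2,2)$ has \emph{two} transpositions and $o(\sigma)=2(n-4)$ with $n-4$ odd, so I embed $\sigma$ in $S_{n-4}\times S_4$ and use that a permutation of type $(2,2)$ in $S_4$ has spectrum $\{1\}$ in the characters $(4),(2,2),(1^4)$ and $\{1,-1\}$ in $(3,1),(2,1,1)$. Restricting each $\lambda\in\{(n-2,2),(n-3,3),(n-3,2,1)\}$ to $S_{n-4}\times S_4$, I would isolate just two terms, both with $S_4$-part $(3,1)$: a standard-type $\nu$ on the $(n-4)$-cycle (contributing the nontrivial $(n-4)$-th roots together with their negatives) and a trivial $\nu$ (contributing $\{1,-1\}$, and hence the value $-1$ itself). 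Together these yield all $(n-4)$-th roots and all their negatives, i.e.\ all $2(n-4)$-th roots of unity; the same two terms work at the small value $n=7$, where no single generic $\nu\vdash3$ exists. The contrast with $(i)$ is the conceptual crux: one transposition cannot combine its sign $-1$ with the eigenvalue $1$ of the large cycle, whereas two transpositions can realize the sign $-1$ while keeping the large cycle's eigenvalue equal to $1$, exactly supplying the missing $-1$. The step I expect to be the main obstacle is not the ``existence'' direction (collecting enough Littlewood--Richardson terms to cover the required roots of unity) but the ``exactness'' direction in items $(i)$ and $(v)$: one must be certain that the enumeration of nonzero terms is complete and that a critical value such as $-1$ genuinely does or does not occur. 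This hinges on the parity of $n$ and on a full, if short, accounting of the few subpartitions of the two- and three-row shapes, together with the bookkeeping that certifies genericity of the cycle-spectra through Theorem~\ref{th:1}.
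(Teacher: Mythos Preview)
Your approach is correct and essentially the same as the paper's. In each item you restrict to the same Young subgroup the paper uses ($S_2\times S_{n-2}$ for $(i)$, $S_m\times S_{n-m}$ for $(ii)$--$(iv)$, $S_4\times S_{n-4}$ for $(v)$), identify the same Littlewood--Richardson constituents (in particular the pair $(3,1)$ against $(n-4)$ and $(n-5,1)$ in $(v)$, and a single ``both-generic'' pair of hooks in $(iv)$), and draw the same spectral conclusions from Theorem~\ref{th:1} and Lemma~\ref{l:standard}.

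Two small remarks. First, your worry about an ``exactness'' direction in item~$(v)$ is misplaced: since $p_\lambda^\mu(x)$ always divides $x^{o(\sigma)}-1$, for $(v)$ (as for $(ii)$--$(iv)$) only the inclusion $\{\eta:\eta^{o(\sigma)}=1\}\subseteq\Sp_\lambda(\mu)$ is needed; exactness is an issue only in $(i)$, where you handle it correctly. Second, in $(iv)$ you give the explicit hook splitting only for $\lambda=(m,1^{m-1})$ and wave at the other seven shapes; the paper gives a uniform recipe, taking $\alpha=(d_1-\lambda_1'+4,1^{\lambda_1'-4})$ (with one modification for $\lambda=(m-1,1^{m-2})$) and filling the complement to get $\beta$ with $\alpha_1,\alpha_1',\beta_1,\beta_1'\geqslant3$, which dispatches all eight cases at once. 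Your claim that $n\geqslant13$ forces $m\geqslant7$ and $n-m\geqslant5$ is correct (it follows from $|\lambda|\in\{2m-3,2m-2,2m-1\}$ for the eight shapes), so the $\ell\in\{4,6\}$ exceptions of Theorem~\ref{th:1} are indeed avoided once the chosen hooks have arm and leg at least $3$.
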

\begin{proof}
Suppose that $\lambda=(n-2,2)$ and 
$\mu=(n-2,2)$. There exist only two partitions of 2: $\alpha=(1^2)$ and $\alpha=(2)$. It is easy to see that 
$c^\lambda_{(1^2),\beta}\neq0$ if and only if $\beta=(n-3,1)$, while
$c^\lambda_{(2),\beta}\neq0$ if and only if $\beta=(n-2)$,
$\beta=(n-3,1)$, $\beta=(n-4,2)$.
\begin{center}
\ytableausetup{boxsize=1.2em}
$\begin{ytableau}
\bullet & 1 & 1 & \none[\ldots] & 1 \\
\bullet & 2
\end{ytableau}\quad
\begin{ytableau}
\bullet & \bullet & 1 & \none[\ldots] & 1 \\
1 & 1
\end{ytableau}
\quad\begin{ytableau}
\bullet & \bullet & 1 & \none[\ldots] & 1 \\
1 & 2
\end{ytableau}
\quad
\begin{ytableau}
\bullet & \bullet & 1 & \none[\ldots] & 1 \\
2 & 2
\end{ytableau}
$
\end{center}
By Lemma~\ref{l:Spec},
we find that 
\begin{multline*}
\Sp_\lambda(\mu)=
\Sp_{(n-2)}((n-2))\cup\Sp_{(n-3,1)}((n-2))\cup
\Sp_{(n-4,2)}((n-2))\\\cup\{-1\}\times\Sp_{(n-3,1)}((n-2)).
\end{multline*}
Now $\Sp_{(n-2)}((n-2))=\{1\}$ and 
$\Sp_{(n-3,1)}((n-2))=\{\eta\in\mathbb{C}\setminus\{1\}~|~\eta^{(n-2)}=1\}$ by Lemma~\ref{l:standard}.
Clearly, $\Sp_{(n-4,2)}((n-2))\subseteq 
\{\eta\in\mathbb{C}~|~\eta^{(n-2)}=1\}$.
If $n$ is even, then we see that $\Sp_\lambda((n-2,2))=\{\eta\in\mathbb{C}~|~\eta^{(n-2)}=1\}$, so $p_\lambda^\mu(x)=x^{n-2}-1=x^{o(\sigma)}-1$.
If $n$ is odd, then we get that $
\Sp_\lambda((n-2,2))=\{\eta\in\mathbb{C}\setminus\{-1\}~|~\eta^{2(n-2)}=1\}$ and hence 
$p_\lambda^\mu(x)=(x^{2(n-2)}-1)/(x+1)$, as claimed.

Suppose that $\lambda=(n-2,2)$ and $\mu=(m,n-m)$, where
$m>n-m\geqslant3$. Then we see that $c^{\lambda}_{(n-m-1,1),(m)}>0$ and $c^{\lambda}_{(n-m-1,1),(m-1,1)}>0$. 
\begin{center}
\ytableausetup{boxsize=1.2em}
$
\begin{ytableau}
\bullet & \bullet & 1 & \none[\ldots] & 1 \\
\bullet & 1
\end{ytableau}
\quad\begin{ytableau}
\bullet & \bullet & 1 & \none[\ldots] & 1 \\
\bullet & 2
\end{ytableau}
\quad
\begin{ytableau}
\bullet & \bullet & \bullet & \none[\ldots] & 1 \\
1 & 1
\end{ytableau}
\quad
\begin{ytableau}
\bullet & \bullet & \bullet & \none[\ldots] & 1 \\
1 & 2
\end{ytableau}
$
\end{center}
By Lemmas~\ref{l:Spec} and~\ref{l:standard},
we get that 
$$\{\eta\in\mathbb{C}\setminus\{1\}~|~\eta^{n-m}=1\}\times
\{\zeta\in\mathbb{C}~|~\zeta^m=1\}\subseteq\Sp_\lambda(\mu).$$
Since $c^{\lambda}_{(n-m),(m)},c^{\lambda}_{(n-m),(m-1,1)}>0$,
we infer that $\{\zeta\in\mathbb{C}~|~\zeta^m=1\}\subseteq\Sp_\lambda(\mu)$.
Therefore, 
$\{\eta\in\mathbb{C}~|~\eta^{n-m}=1\}\times
\{\zeta\in\mathbb{C}~|~\zeta^m=1\}\subseteq\Sp_\lambda(\mu)$.
This implies that $\Sp_\lambda(\mu)=\{\eta\in\mathbb{C}~|~\eta^{o(\sigma)}=1\}$ and $p_\lambda^\mu(x)=x^{o(\sigma)}-1$, as required.

Suppose that $\lambda=(n-2,1,1)$ and $\mu=(m,n-m)$, where $n\geqslant 8$ and $m>n-m\geqslant2$. Note that $m\geqslant 5$.
Since  $c^\lambda_{(n-m-1,1),(m)}>0$
and $c^\lambda_{(n-m-1,1),(m-1,1)}>0$ (see~Fig.~\ref{f:n-2-1-1}), 
Lemmas~\ref{l:Spec} and~\ref{l:standard} imply that
$$\{\eta\in\mathbb{C}\setminus\{1\}~|~\eta^{n-m}=1\}\times
\{\zeta\in\mathbb{C}~|~\zeta^m=1\}\subseteq\Sp_\lambda(\mu).$$
Now since $c^\lambda_{(n-m), (m-2,1,1)}>0$ and 
$\Sp_{(m-2,1,1)}((m))=\{\eta\in\mathbb{C}~|~\eta^m=1\}$,
we infer that 
$\Sp_\lambda(\mu)=\{\eta\in\mathbb{C}~|~\eta^{o(\sigma)}=1\}$ and $p_\lambda^\mu(x)=x^{o(\sigma)}-1$.

\begin{figure}$
\ytableausetup{boxsize=1.2em}
\begin{ytableau}
\bullet &  \none[\ldots] & \bullet & 1 & \none[\ldots] & 1 \\
\bullet \\
1
\end{ytableau}
\quad\begin{ytableau}
\bullet & \none[\ldots] & \bullet & 1 & \none[\ldots] & 1 \\
\bullet \\
2
\end{ytableau}
\quad
\begin{ytableau}
\bullet & \none[\ldots] & \bullet & 1 & \none[\ldots] & 1 \\
2 \\
3
\end{ytableau}
$\caption{Tableaux for $\lambda=(n-2,1,1)$}\label{f:n-2-1-1}
\end{figure}

Suppose that $\lambda\in\{(m, 1^{m-1})$, $(m+1, 1^{m-2})$, 
$(m, 1^{m-2})$, $(m-1,1^{m-2})$, $(m,2,1^{m-3})$, $(m-1,2,1^{m-3})$, $(m-1,3,1^{m-3})$, $(m-1,2,2,1^{m-4})\}$, where $|\lambda|\geqslant13$
and $\mu=(d_2, d_1)$, where $m=d_2>d_1>1$.
We see that in all cases $m-1\leqslant \lambda_1'\leqslant m$
and $0\leqslant\lambda_1'-d_1\leqslant 2$.
If $\lambda\neq(m-1,1^{m-2})$, then put $\alpha=(d_1-\lambda_1'+4, 1^{\lambda_1'-4})$, and if $\lambda=(m-1,1^{m-2})$, then put
$\alpha=(d_1-\lambda_1'+5, 1^{\lambda_1'-5})$.
Then $\alpha$ is a partition of $d_1$ such that $\alpha\subseteq\lambda$:
we remove three or four bottom boxes in the first column of $[\lambda]$ and then add remaining $d_1-\lambda_1'+3$ or  $d_1-\lambda_1'+4$ boxes in the first row to obtain $\alpha$. Then $c^\lambda_{\alpha,\beta}\neq0$,
where $\beta$ corresponds to the content of an $\LR$-tableau of shape $\lambda/\alpha$ when we write 1 everywhere in the first row and write 1, 2, 3 
if $\lambda\neq(m-1,1^{m-2})$ or 1, 2, 3, 4 if $\lambda=(m-1,1^{m-2})$ in the first column, while the remaining boxes (if any) are filled as desired
(see~Fig.~\ref{f:hooks} with examples $\lambda=(m,1^{m-1})$ and $(m-1,2,2,1^{m-4})$, where $m=7$, and $\lambda=(m-1,1^{m-2})$, where $m=8$). 
Then we find that $\alpha_1,\alpha_1',\beta_1,\beta_1'\geqslant 3$
in all cases. By Lemma~\ref{l:Spec}, we find that 
$\Sp_\alpha((d_1))\times\Sp_\beta((d_2))\subseteq\Sp_\lambda(\mu)$. By Theorem~\ref{th:1},
we know that $\Sp_\alpha((d_1))=\{\eta\in\mathbb{C}~|~\eta^{d_1}=1\}$ and $\Sp_\beta((d_2))=\{\eta\in\mathbb{C}~|~\eta^{d_2}=1\}$.
Therefore, $\Sp_\lambda(\mu)=\{\eta\in\mathbb{C}~|~\eta^{o(\sigma)}=1\}$ and hence $p_\lambda^\mu(x)=x^{o(\sigma)}-1$, as required.
\begin{figure}$
\ytableausetup{boxsize=1em}
\begin{ytableau}
\bullet &  \bullet & \bullet & 1 & 1 & 1 &  1 \\
\bullet \\
\bullet \\
\bullet \\
1 \\
2 \\
3
\end{ytableau}\quad
\begin{ytableau}
\bullet &  \bullet & \bullet  & \bullet & 1 &  1 \\
\bullet & 1 \\
\bullet & 2 \\
1 \\
2 \\
3
\end{ytableau}
\quad
\begin{ytableau}
\bullet &  \bullet & \bullet & 1 & 1 & 1 &  1 \\
\bullet \\
\bullet \\
1 \\
2 \\
3 \\
4
\end{ytableau}
$\caption{Examples of $\LR$-tableaux}\label{f:hooks}
\end{figure}

Suppose that $\lambda\in\{(n-2, 2),(n-3,3), (n-3, 2, 1)\}$
and $\mu=(n-4,2,2)$, where $n\geqslant7$ is odd.
Write $\sigma=\pi\tau$, where $\pi=(1,2)(3,4)$ and $\tau=(5,6,\ldots, n)$.
It is easy to see that $(3,1)\subset\lambda$ and $c^\lambda_{(3,1),(n-4)}, c^\lambda_{(3,1),(n-5,1)}>0$. 
\begin{center}$
\ytableausetup{boxsize=1.2em}
\begin{ytableau}
\bullet & \bullet & \bullet & 1 & 1 \\
\bullet & 2
\end{ytableau}\quad
\begin{ytableau}
\bullet & \bullet & \bullet & 1 \\
\bullet & 1 & 2
\end{ytableau}
\quad\begin{ytableau}
\bullet & \bullet & \bullet & 1 \\
\bullet & 1  \\
2
\end{ytableau}$
\end{center}
By Lemma~\ref{l:Spec}, we infer that
$$\Sp_{(3,1)}((2,2))\times\Sp_{(n-4)}((n-4))
\cup \Sp_{(3,1)}((2,2))\times\Sp_{(n-5,1)}((n-4))\subseteq\Sp_\lambda(\mu).$$
By Lemma~\ref{l:standard}, $\Sp_{(3,1)}((2,2))=\{1,-1\}$ and 
$\Sp_{(n-5,1)}((n-4))=\{\eta\in\mathbb{C}\setminus\{1\}~|~\eta^{n-4}=1\}$.
This implies that $\{\eta\in\mathbb{C}~|~\eta^{2(n-4)}=1\}\subseteq\Sp_\lambda(\mu)$
and hence $p_\lambda^\mu(x)=x^{2(n-4)}-1$, as required.
\end{proof}

\section{Proof of Theorem~\ref{t:main}} 
\begin{proof}[Proof of Theorem~\ref{t:main}]
Denote by $H$ the cyclic subgroup of $S_n$ generated by $\sigma$.
If $\eta\in\mathbb{C}$ and $\eta^{o(\sigma)}=1$, then denote by 
$\psi_\eta$ the irreducible character of $H$ such that $\psi_\eta(\sigma^i)=\eta^i$,
where $0\leqslant i\leqslant o(\sigma)-1$. Then it is easy to see that the dimension of eigenspace of $\rho(\sigma)$ associated to $\eta$ equals $\langle\chi^\lambda\downarrow H, \psi_\eta\rangle$. 
Thus, for small $n$ we can easily check whether this dimension is zero or not. By going through all possible $\eta$, we can find the minimal polynomial
$p_\lambda^\mu(x)$. Using this strategy, we verify the assertion of the theorem for all $n\leqslant 13$ in the computer algebra system GAP~\cite{GAP}\footnote{The code can be found at the following link: \url{https://github.com/AlexeyStaroletov/SymmetricGroup/blob/main/Eigenvalues/min\_poly.g}}. 

Since $\chi^{\lambda'}=\chi^\lambda\cdot\chi^{(1^n)}$,
the formula for the dimension of eigenspaces also shows that $\eta\in\Sp_\lambda(\mu)$ if and only if $\sgn(\sigma)\eta\in\Sp_{\lambda'}(\mu)$.

Now we prove the statement by induction on $n$. We assume that $n\geqslant 14$. If $\sigma=1$, then $p_\lambda^\mu(x)=x-1=x^{o(\sigma)}-1$ and hence the theorem is true in this case. So we can assume that $\sigma\neq1$.
Therefore, there exist $k\geqslant 1$ distinct elements $d_1,\ldots,d_k$ in $\mu$ such that $1<d_1<\ldots<d_k$. If $k=1$, then the assertion is true by Theorem~\ref{th:1}. Let $k\geqslant2$. Note that the assertion is symmetric with respect to $\lambda$ and $\lambda'$, so we can assume that $\lambda_1\geqslant\lambda'_1$.
By~Lemma~\ref{l:standard}, we can assume that $\lambda\neq(n-1,1)$. 
By~Lemma~\ref{l:except}, if
$k=2$ and $\mu=(m,n-m)$, where $m>n-m\geqslant2$, then we can assume that
$\lambda\not\in\{(n-2,2), (n-2,1,1)\}$
and $\lambda\not\in\{(m, 1^{m-1})$, $(m+1, 1^{m-2})$, 
$(m, 1^{m-2})$, $(m,2,1^{m-3})$, $(m-1,1^{m-2})$, $(m-1,2,1^{m-3})$,$(m-1,3,1^{m-3})$, $(m-1,2,2,1^{m-4})\}$. Similarly, if 
$\mu=(n-4,2,2)$, where $n$ is odd, then 
the cases  $\lambda\in\{(n-2, 2),(n-3,3), (n-3, 2, 1)\}$
are covered by Lemma~\ref{l:except}.

Consider $\sigma$ as an element of $S_{d_1}\times S_{n-d_1}\leq S_n$.
Write $\sigma=\pi\tau$, where $\pi\in S_{d_1}$ is a cycle of length $d_1$
and $\tau\in S_{n-d_1}$. Let $\delta$ denote the partition of $n$ that corresponds to the cycle type of $\tau$, that is, it is obtained from $\mu$ by removing 
one element equal to $d_1$. Denote by $\nu$ the partition of $m:=n-d_1$ which corresponds to the Young diagram obtained from $[\lambda]$ by removing the first corner $d_1$ times. 
Denote by $\alpha$ (if exists) the partition of $m$ which corresponds to the Young diagram obtained from $[\lambda]$ by removing the first corner $d_1-1$ times and then the second corner once.

Note that neither $(\nu,\delta)$ nor $(\alpha,\delta)$ coincides with the exceptional pairs $(\lambda,\mu)$ from items $(vii)-(xi)$ of Theorem since otherwise $n=m+d_1\leqslant 12$.
By Lemma~\ref{l:remove-d1} and assumptions on $\lambda$, if $\delta=(s)$, then neither $\nu$ nor $\alpha$ belongs to $\{((s),(1^s),(s-1,1),(2,1^{s-2})\}$,
and if $\delta=(s-2,2)$, where $s$ is odd, then neither $\nu$ nor $\alpha$ belongs to $\{(s-2,2), (2,2,1^{s-4})\}$.

It easy to see that to prove the assertion for $\sigma$, it suffices to show that if $t:=n-\lambda_1$, then 
\begin{equation}\label{eq:1}
\Sp_\lambda(\mu)=A(\mu,t):=\{
\eta_{1}\cdots\eta_{t}~|~\eta_j^{\mu_{i_j}}=1, \text{ where } 
1\leqslant j\leqslant t\text{ and }\mu_{i_j}\in\mu\}.   
\end{equation}
In particular, if $t\geqslant k$, then $p_\lambda^\mu(x)=x^{o(\sigma)}-1$. 
We prove (\ref{eq:1}) by considering two cases:
$t\leqslant k-1$ and $t\geqslant k$.

{\bf Case~1}. Suppose that $t\leqslant k-1$.
Since $\lambda\neq(n),(n-1,1)$, we infer that $t\geqslant 2$.
By Lemma~\ref{l:row}(ii), we find that $\lambda_1-\lambda_2\geqslant d_1$. This implies that the diagram $[\nu]$ is obtained from $[\lambda]$ by removing $d_1$ boxes in the first row. 

First, we show that $\Sp_\lambda(\mu)\subseteq A(\mu,t)$.
By Lemma~\ref{l:Spec}, it is sufficient to verify that
if $\gamma\vdash m$, $\omega\vdash n-m=d_1$, and 
$c^\lambda_{\gamma,\omega}\neq0$,
then $\Sp_\gamma(\delta)\times\Sp_\omega((d_1))\subseteq A(\mu,t)$. 
Fix such partitions $\gamma$ and $\omega$. By assumption, $(\lambda,\mu)\neq((s,1^{s-1}), (s,s-1))$, where $s\in\mathbb{N}$, 
therefore $n-d_1-\lambda_1'$ is at least the number of distinct elements of $\delta$ that are greater than 1 by Lemma~\ref{l:column}.
Then the same is true for $m-\gamma_1'$ since $\gamma_1'\leqslant\lambda_1'$. 
Note that $\gamma_1\geqslant\lambda_1-d_1$
and $\gamma_1=\lambda_1-d_1$ only if $\gamma=\nu$. 
Therefore, $m-\gamma_1\leqslant m-\lambda_1+d_1=t$.
If $\gamma=\nu$, then $c^\lambda_{\nu,\beta}\neq0$ only if $\beta=(d_1)$. Applying induction,
we find that $\Sp_{\gamma}(\delta)\subseteq A(\delta,t)$.
This implies that
$$\Sp_{\gamma}(\delta)\times\Sp_{(d_1)}((d_1))\\=\Sp_{\gamma}(\delta)\subseteq A(\delta,t)\subseteq A(\mu,t).$$
If $\gamma\neq\nu$, then $m-\gamma_1\leqslant t-1$ and hence, by induction,
$$\Sp_{\gamma}(\delta)\times\Sp_{\beta}((d_1))
\subseteq A(\delta,t-1)\times\{\eta\in\mathbb{C}~|~\eta^{d_1}=1\}\subseteq A(\mu,t).$$
Therefore, it is true that $\Sp_\lambda(\mu)\subseteq A(\mu,t)$.
Now we show the reverse inclusion.
Since $c^\lambda_{\nu,(d_1)}\neq 0$, Lemma~\ref{l:Spec} and induction imply that
$$\Sp_{\lambda}(\mu)\supseteq\Sp_{\nu}(\delta)\times\Sp_{(d_1)}((d_1))=\Sp_{\nu}(\delta)=A(\delta,t).$$
Since $\lambda_1-\lambda_2\geqslant d_1$, we see that
$[\alpha]$ is obtained from $[\lambda]$ by removing $d_1-1$ times the corner in the first row, and then some corner in another row.
This implies that $c^\lambda_{\alpha,(d_1-1,1)}\neq0$ (see Fig.~\ref{f:longrow}).
Since $m-\alpha_1=n-d_1-(\lambda_1-d_1+1)=t-1$
and $m-\alpha_1'\geqslant m-\lambda_1'$, induction implies that
$$
\Sp_\lambda(\mu)\supseteq\Sp_{\alpha}(\delta)\times\Sp_{(d_1-1,1)}((d_1))=A(\delta,t-1)\times\{\eta\in\mathbb{C}\setminus\{1\}~|~\eta^{d_1}=1\}.$$
It is clear that 
$A(\delta,t)\cup[A(\delta,t-1)\times\{\eta\in\mathbb{C}\setminus\{1\}~|~\eta^{d_1}=1\}]=A(\mu,t)$. Therefore, $A(\mu,t)\subseteq\Sp_\lambda(\mu)$ and hence $\Sp_\lambda(\mu)=A(\mu,t)$, as required.

{\bf Case~2}. Suppose that $t\geqslant k$.
Then $n-\lambda_1'\geqslant n-\lambda_1\geqslant k$. 
So we need to prove that $p_\lambda^\mu(x)=x^{o(\sigma)}-1$.
By Lemma~\ref{l:row}, we have $m-\nu_1\geqslant k$.
By Lemma~\ref{l:column}, 
we find that $m-\nu_1'$ is at least the number of distinct elements of $\delta$ that are greater than 1. 
By induction, we know that $p_\nu^\delta(x)=x^{o(\tau)}-1$ and
$\Sp_\nu(\delta)=\{\eta\in\mathbb{C}~|~\eta^{o(\tau)}=1\}$.
If at least two elements of $\mu$ are equal to $d_1$,
then $d_1=o(\pi)$ divides $o(\tau)$ and $o(\sigma)=o(\tau)$. 
Take any $\gamma\vdash d_1$ such that $c^\lambda_{\nu,\gamma}\neq0$.
By Lemma~\ref{l:Spec}, we find that
\begin{center}
$\begin{aligned}
\Sp_\lambda(\mu)\supseteq
\Sp_\nu(\delta)\times\Sp_\gamma((d_1))=&
\{\eta\in\mathbb{C}~|~\eta^{o(\tau)}=1\}\times 
\Sp_\gamma((d_1))\\=&\{\eta\in\mathbb{C}~|~\eta^{o(\tau)}=1\}.
\end{aligned}$
\end{center}
Therefore, $p_\lambda^\mu(x)=x^{o(\sigma)}-1$, as required. 
So we can assume that only one element of $\mu$ equals $d_1$
and hence $\delta$ has exactly $k-1$ distinct elements
that are greater than~1. 

Suppose now that there exists $\gamma\vdash d_1$ such that
$c^\lambda_{\nu,\gamma}\neq0$ and
$$\gamma\not\in\{ (d_1), (1^{d_1}), (d_1-1,1), (2, 1^{d_1-1}), (2^2), (2^3), (3^2)\}.$$
Then $p_\gamma^{(d_1)}(x)=x^{d_1}-1$ by Theorem~\ref{th:1}.
Therefore, Lemma~\ref{l:Spec} implies that
$\Sp_\lambda(\mu)\supseteq\Sp_\nu(\delta)\times\Sp_\gamma((d_1))=
\{\eta\in\mathbb{C}~|~\eta^{o(\sigma)}=1\}$ and hence 
$p_\lambda^\mu(x)=x^{o(\sigma)}-1$, as required.

It remains to consider the case when 
\begin{equation}\label{eq:2}
c^\lambda_{\nu,\gamma}\neq0\text{ implies that }
\gamma\in\{ (d_1), (1^{d_1}, (d_1-1,1), (2, 1^{d_1-1}), (2^2), (2^3), (3^2)\}.    
\end{equation}

It suffices to show that for each $\gamma$ there exist partitions $\kappa\vdash n-d_1$
and $\omega\vdash d_1$ such that $c^\lambda_{\kappa,\omega}\neq 0$,
$\Sp_\kappa(\delta)=\{\eta\in\mathbb{C}~|~\eta^{o(\tau)}=1\}$
and $\Sp_{\omega}((d_1))\cup \Sp_{\gamma}((d_1))=
\{\eta\in\mathbb{C}~|~\eta^{d_1}=1\}$.
Indeed, in this case Lemma~\ref{l:Spec} implies that
$$\Sp_\lambda(\mu)\supseteq\{\eta\in\mathbb{C}~|~\eta^{o(\tau)}=1\}
\times\{\Sp_{\omega}((d_1))\cup \Sp_{\gamma}((d_1))\}=
\{\eta\in\mathbb{C}~|~\eta^{o(\sigma)}=1\}.$$
As the final step of the proof, we show that the required $\kappa$ and $\omega$ exist considering possible cases for $\gamma$. 
In fact, we use $\nu$ or $\alpha$ for $\kappa$ in almost all cases.
Denote by $T$ an $\LR$-tableau of shape $\lambda/\nu$ with content $\gamma$.

{\bf Subcase~2.1}. Suppose that $\gamma=(d_1)$. Then $\Sp_\gamma((d_1))=\{1\}$.
First, assume that the skew diagram $[\lambda/\nu]$ is connected.
Then all its boxes are in the same row. By the definition of $\nu$,
we find that $[\lambda/\nu]$ is the end part of the first row of $[\lambda]$ and $\lambda_1-\lambda_2\geqslant d_1$.
In this case, we take $\kappa=\alpha$. 
Note that $[\alpha]$ is obtained from $[\lambda]$ by removing $d_1-1$ times the corner in the first row, and then some corner in another row. This implies that $c^\lambda_{\alpha,(d_1-1,1)}>0$ (see Fig.~\ref{f:longrow}).
\begin{figure}
\begin{center}
$
\begin{ytableau}
\bullet & \none[\ldots] & \bullet & 1 & 1 & \ldots & 1 \\
\none[\ldots] & \none[\ldots] & \none[\ldots] \\
\none[\ldots] & \none[\ldots] & \bullet \\
\none[\ldots] & \none[\ldots] 
\end{ytableau}
\quad\longrightarrow\quad
\begin{ytableau}
\bullet & \none[\ldots] & \bullet & \bullet & 1 & \ldots & 1 \\
\none[\ldots] & \none[\ldots] & \none[\ldots] \\
\none[\ldots] & \none[\ldots] & 2  \\
\none[\ldots] & \none[\ldots] 
\end{ytableau}
$
\end{center}\caption{Case $\lambda_1-\lambda_2\geqslant d_1$}\label{f:longrow}
\end{figure}
Take $\omega=(d_1-1,1)$. Then $\Sp_\omega((d_1))=\{\eta\in\mathbb{C}\setminus\{1\}~|~\eta^{d_1}=1\}$ by Lemma~\ref{l:standard}. 
Since $m-\alpha_1=m-\nu_1-1=n-d_1-(\lambda_1-d_1)-1=t-1\geqslant k-1$ and $m-\alpha_1'\geqslant m-\nu_1'\geqslant k-1$,
we infer that $\Sp_\alpha(\delta)=\{\eta\in\mathbb{C}~|~\eta^{o(\tau)}=1\}$ by induction.
Therefore, we find the required pair $(\kappa,\omega)$ in this case.

Suppose now that the skew diagram $[\lambda/\nu]$ has more than one connected component. Then, in $T$,
we replace 1 at the right end of the last row with 2 and 
obtain an $\LR$-tableau of shape $\lambda/\nu$ with content $(d_1-1,1)$. 
Since $\Sp_{(d_1-1,1)}((d_1))=\{\eta\in\mathbb{C}\setminus\{1\}~|~\eta^{d_1}=1\}$, we can use $\kappa=\nu$ and $\omega=(d_1-1,1)$ in this case.

{\bf Subcase 2.2}. Suppose that $\gamma=(1^{d_1})$.
Then $\Sp_\gamma((d_1))=\{\sgn(\pi)\}$.

Suppose that $[\lambda/\nu]$ has exactly one connected component.
Then $[\lambda/\mu]$ is a column of length $d_1$.
In this case, we take $\kappa=\alpha$.
Note that $[\alpha]$ is obtained from $[\lambda]$ by removing $d_1-1$ times the corner in 
the same column, and then a corner in another column.
This implies that $c^\lambda_{\alpha,(2,1^{d_1-2})}\neq0$.
\begin{center}
$
\begin{ytableau}
\none[\ldots] & \none[\ldots] & 1 \\
\none[\ldots] & \none[\ldots] & 2 \\
\none[\ldots] & \none[\ldots] & 3  \\
\none[\ldots] & \none[\ldots] & 4 \\
\none[\ldots] & \none[\ldots] 
\end{ytableau}
\quad\longrightarrow\quad
\begin{ytableau}
\none[\ldots] & \none[\ldots] & \bullet \\
\none[\ldots] & \none[\ldots] & 1 \\
\none[\ldots] & \none[\ldots] & 2 \\
\none[\ldots] & \none[\ldots] & 3 \\
\none[\ldots] &  1 
\end{ytableau}
$
\end{center}
Put $\omega=(2,1^{d_1-2})$. 
By Theorem~\ref{th:1}, we see that
$\Sp_{\gamma}((d_1))\cup\Sp_{\omega}((d_1))=\{\eta\in\mathbb{C}~|~\eta^{d_1}=1\}$.
On the other hand, $m-\alpha_1\geqslant m-\nu_1-1\geqslant k-1$
and $m-\alpha_1'\geqslant m-\nu_1'$. By induction, we get that $\Sp_\alpha(\delta)=\{\eta\in\mathbb{C}~|~\eta^{o(\tau)}=1\}$. 
Thus, the pair $(\kappa,\omega)$ satisfies the required conditions.

Suppose that $[\lambda/\nu]$ has at least two components.
Then all components are connected parts of columns.
Consider the last component when we read the word $row(T)^r$.
Replace numbers of the last component 
with $1,2,\ldots$ reading the word $row(T)^r$. 
\begin{center}
$
\begin{ytableau}
\none[\ldots] & \none[\ldots] & 1 \\
\none[\ldots] & \none[\ldots] & 2 \\
\none[\ldots] & \none[\ldots] & 3  \\
4 & \none[\ldots] \\
5 & \none[\ldots] 
\end{ytableau}
\quad\longrightarrow\quad
\begin{ytableau}
\none[\ldots] & \none[\ldots] & 1 \\
\none[\ldots] & \none[\ldots] & 2 \\
\none[\ldots] & \none[\ldots] & 3  \\
1 & \none[\ldots] \\
2 & \none[\ldots] 
\end{ytableau}
$
\end{center}
Then we obtain
an ${\LR}$-tableau of shape $\lambda/\nu$ with content $\omega$,
where $[\omega]$ has two columns. 
If $\Sp_\omega((d_1))\supseteq\{\eta\in\mathbb{C}\setminus\{\sgn(\pi)\}~|~\eta^{d_1}=1\}$, then the pair $(\nu,\omega)$ is required.
By Theorem~\ref{th:1}, this is not true only for $\omega=(2,2), (2,2,2)$. In these cases, $d_1$ is even, so $\sgn(\pi)=-1$.
By the construction of $\omega$, the last component is filled with numbers 1, 2 or 1, 2, 3, respectively. Then we replace numbers with $1,3$ and $1, 2, 4$, respectively. This corresponds to contents $(2,1,1)$ and $(2,2,1,1)$, which we use for $\omega$. Then $\Sp_\omega((d_1))\supseteq\{\eta\in\mathbb{C}\setminus\{-1\}~|~\eta^{d_1}=1\}$ and hence the pair $(\nu,\omega)$ is required.

{\bf Subcase 2.3}. Suppose that $\gamma=(d_1-1,1)$. Then we can assume that $d_1\geqslant 3$ since the case $d_1=2$ corresponds to Subcase~2.2. 
By Theorem~\ref{th:1}, we know that $\Sp_\gamma((d_1))=\{\eta\in\mathbb{C}\setminus\{1\}~|~\eta^{d_1}=1\}$. 

Suppose that $|\mathcal{LR}([\lambda/\nu])|=1$.
Then there exists a partition $\theta$ such
that $[\lambda/\nu]\cong [\theta]$ or $[\lambda/\nu]\cong[\theta]^\circ$
by Lemma~\ref{l:unique}. It is easy to see that $\theta=\gamma$.
Assume that $[\lambda/\nu]\cong [\theta]$. By the construction of $\nu$, we find that $[\nu]$ is obtained from 
$[\lambda]$ by removing $d_1-1$ boxes in the end of the first row and one box in the end of the second. 
We see that $c^\lambda_{\alpha,(d_1)}\neq0$, where $\omega=(d_1)$.
\begin{center}
$
\begin{ytableau}
\bullet & \none[\ldots] & \bullet & 1 & 1 & \none[\ldots] & 1 \\
\none[\ldots] & \none[\ldots] & \bullet & 2 \\
\none[\ldots] & \none[\ldots] & \none[\ldots] \\
\none[\ldots] & \none[\ldots] & \bullet
\end{ytableau}
\quad\longrightarrow\quad
\begin{ytableau}
\bullet & \none[\ldots] & \bullet & \bullet & 1 & \none[\ldots] & 1 \\
\none[\ldots] & \none[\ldots] & \bullet & 1 \\
\none[\ldots] & \none[\ldots] & \none[\ldots] \\
\none[\ldots] & \none[\ldots] & 1
\end{ytableau}
$
\end{center}
As above, we find that $m-\alpha_1=m-\nu_1-1\geqslant k-1$
and $m-\alpha_1'\geqslant m-\nu_1'\geqslant k-1$, so by induction
$\Sp_\alpha(\delta)=\{\eta\in\mathbb{C}~|~\eta^{o(\tau)}=1\}$.
Since $1\in\Sp_{(d_1)}((d_1))$, the pair $(\alpha,\omega)$ satisfies the required conditions.

Assume that $[\lambda/\nu]\cong [\gamma]^\circ$. By the construction of $\nu$, we find that $d_1=3$ and $[\nu]$ is obtained from 
$[\lambda]$ by removing boxes in the first and second rows.
If $\alpha$ exists, then we see that $c^\lambda_{\alpha,\omega}\neq0$, where $\omega=(1^3)$.
\begin{center}
$
\begin{ytableau}
\bullet & \none[\ldots] & \bullet & 1 \\
\none[\ldots] & \none[\ldots] & 1 & 2 \\
\none[\ldots] & \bullet \\
\end{ytableau}
\quad\longrightarrow\quad
\begin{ytableau}
\bullet & \none[\ldots] & \bullet & 1 \\
\none[\ldots] & \none[\ldots] & \bullet & 2 \\
\none[\ldots] & 3 \\
\end{ytableau}
$
\end{center}
Since $\Sp_{(1^3)}((3))=\{1\}$, we infer that the pair $(\alpha,\omega)$ is required.

Assume that $[\lambda/\nu]\cong [\gamma]^\circ$ and $\alpha$ does not exist. Then $\lambda=(t,t)$, where $t\geqslant 7$.
Take $\kappa=(t, t-3)$ and $\omega=(d_1)$.
\begin{center}
$
\begin{ytableau}
 \none[\ldots] & \bullet & \bullet & 1 \\
\none[\ldots] & \bullet & 1 & 2 \\
\end{ytableau}
\quad\longrightarrow\quad
\begin{ytableau}
\none[\ldots] & \bullet & \bullet & \bullet & \bullet \\
\none[\ldots] & \bullet & 1 &  1 & 1  \\
\end{ytableau}
$
\end{center}

Now $m-\kappa_1=m-\nu_1-1\geqslant k-1$.
Since $t\geqslant 7$, we see that
$\kappa\not\in\{(s),(1^s),(s-1,1), (2,1^{s-2}), (s-2,2), (2,2,2^{s-4})\}$, where $s\in\mathbb{N}$. 
By induction, we get that $\Sp_\kappa(\delta)=\{\eta\in\mathbb{C}~|~\eta^{o(\tau)}=1\}$. Therefore, the pair $(\kappa,\omega)$ satisfies the required conditions.

Suppose that $|\mathcal{LR}([\lambda/\nu])|>1$.
Then there exists $\omega\in\{(2, 1^{d_1-1}), (2^2), (2^3), (3^2)\}$
such that $c^\lambda_{\nu,\omega}\neq0$. 
If $\omega\neq(2, 1^{d_1-1})$ or
$\omega=(2, 1^{d_1-1})$ and $d_1$ is even, 
then $1\in\Sp_\omega((d_1))$ by Theorem~\ref{th:1},
so we can take $\kappa=\nu$ and $\omega$. 
Therefore, we can assume that $\omega=(2,1^{d_1-1})$, where $d_1$ is odd.
Since $\omega\neq\gamma$, we infer that $d_1\geqslant5$.
By Lemmas~\ref{l:unique} and \ref{l:nonstandard}, 
we find that $\mathcal{LR}([\lambda/\nu])\cap\{(d_1),(d_1-2,2),(d_1-2,1, 1)\}\neq\varnothing$. By (\ref{eq:2}), we know that 
$\mathcal{LR}([\lambda/\nu])\cap\{(d_1-2,2),(d_1-2,1,1)\}=\varnothing$. 
This implies that $(d_1)\in\mathcal{LR}([\lambda/\nu])$, however this case was considered above.

{\bf Subcase 2.4}. Suppose that $\gamma=(2, 1^{d_1-2})$. Then $\Sp_\gamma((d_1))=\{\eta\in\mathbb{C}\setminus\{\sgn(\pi)\}~|~\eta^{d_1}=1\}$. 
By previous cases, we get that $d_1\geqslant 4$.
Similarly to Subcase 2.3, we can assume that $|\mathcal{LR}([\lambda/\nu])|=1$. 
By Lemma~\ref{l:unique}, there exists a partition $\theta$ such that
$[\lambda/\nu]\cong[\theta]$ or $[\lambda/\nu]\cong[\theta]^\circ$.
By the construction of $\nu$, we find that $\theta=\gamma$ and $[\lambda/\nu]$
 is located at the end of the first $d_1-1$ rows of $[\lambda]$.
If $[\lambda/\nu]\cong[\theta]$,
then $c^\lambda_{\alpha,\omega}\neq0$, where $\omega=(2,2,1^{d_1-2})$, as illustrated in the example below.

\begin{center}
$
\begin{ytableau}
\bullet & \none[\ldots] & \bullet & 1 & 1 \\
\bullet & \none[\ldots] & \bullet & 2 & \none \\
\bullet & \none[\ldots] & \bullet & 3 & \none \\
\bullet & \none[\ldots] & \bullet  \\
\end{ytableau}
\quad\longrightarrow\quad
\begin{ytableau}
\bullet & \none[\ldots] & \bullet & \bullet & 1 \\
\bullet & \none[\ldots] & \bullet & 1 & \none \\
\bullet & \none[\ldots] & \bullet & 2 & \none \\
\bullet & \none[\ldots] & 2   \\
\end{ytableau}
\quad
\begin{ytableau}
\bullet & \none[\ldots] & \bullet & 1 \\
\bullet & \none[\ldots] & \bullet & 2  \\
\bullet & \none[\ldots] & 1 & 3  \\
\bullet & \none[\ldots]   \\
\end{ytableau}
\quad\longrightarrow\quad
\begin{ytableau}
\bullet & \none[\ldots] & \bullet & 1 \\
\bullet & \none[\ldots] & \bullet & 2  \\
\bullet & \none[\ldots] & \bullet & 3  \\
\bullet & 4    \\
\end{ytableau}
$
\end{center}

By Theorem~\ref{th:1}, we get that $\sgn(\pi)\in\Sp_{\omega}((d_1))$.
As above, we see that $m-\alpha_1=m-\nu_1-1\geqslant k-1$ and $m-\alpha_1'\geqslant m-\nu_1'\geqslant k-1$, so by induction
$\Sp_\alpha(\delta)=\{\eta\in\mathbb{C}~|~\eta^{o(\tau)}=1\}$. This implies that the pair $(\alpha,\omega)$ satisfies the required conditions.

If $[\lambda/\nu]\cong[\theta]^\circ$ and $\alpha$ exists, then $c^\lambda_{\alpha,\omega}\neq0$, where $\omega=(1^{d_1})$, as illustrated in the example.
Then the pair $(\alpha,\omega)$ satisfies the required conditions.

Let $[\lambda/\nu]\cong[\theta]^\circ$ and $\alpha$ do not exist. Then $[\lambda]$ has $d_1-1$ rows.
Take $\kappa=(\lambda_1,\lambda_2-1,\ldots,\lambda_{d_1-3}-1, \lambda_{d_1-2}-2, \lambda_{d_1-1}-2)$. We see that $c^\lambda_{\kappa,\omega}\neq 0$, where $\omega=(2,2,1^{d_1-4})$.

\begin{center}
$
\begin{ytableau}
\bullet & \none[\ldots] & \bullet & 1  \\
\bullet & \none[\ldots] & \bullet & 2 \\
\bullet & \none[\ldots] & 1 & 3  
\end{ytableau}
\quad\longrightarrow\quad
\begin{ytableau}
\bullet & \none[\ldots] & \bullet & \bullet  \\
\bullet & \none[\ldots] & 1 & 1 \\
\bullet & \none[\ldots] & 2 & 2  \\
\end{ytableau}
\quad
\begin{ytableau}
\bullet & \none[\ldots] & \bullet & 1  \\
\bullet & \none[\ldots] & \bullet & 2 \\
\bullet & \none[\ldots] & \bullet & 3  \\
\bullet & \none[\ldots] & 1 & 4   \\
\end{ytableau}
\quad\longrightarrow\quad
\begin{ytableau}
\bullet & \none[\ldots] & \bullet & \bullet  \\
\bullet & \none[\ldots] & \bullet & 1 \\
\bullet & \none[\ldots] & 1 & 2  \\
\bullet & \none[\ldots] & 2 & 3   \\
\end{ytableau}
$
\end{center}
By Theorem~\ref{th:1}, we find that $\sgn(\pi)\in\Sp_\omega((d_1))$.
Since $[\kappa]$ has at least three rows and at least three columns, we infer that $\kappa\not\in\{(s),(1^s),(s-1,1), (2,1^{s-2}), (s-2,2), (2,2,2^{s-4})\}$, where $s\in\mathbb{N}$. Now $m-\kappa_1=n-\nu_1-1\geqslant k-1$ and $m-\kappa_1'\geqslant m-\nu_1'\geqslant k-1$.
By induction, $\Sp_{\kappa}(\delta)=\{\eta\in\mathbb{C}~|~\eta^{o(\tau)}=1\}$. Therefore, the pair $(\kappa,\omega)$ satisfies the required conditions.

{\bf Subcase 2.5}. Suppose that $\gamma=(2,2)$. Then 
$d_1=4$ and $\Sp_\gamma((4))=\{1,-1\}$. 
By previous cases and $(\ref{eq:2})$,
we find that $|\mathcal{LR}([\lambda/\nu])|=1$.
By Lemma~\ref{l:unique}, we see that
$[\nu]$ is obtained from $[\lambda]$ by removing 
two boxes in the end of first row and 
two boxes in the end of second row. 
Then we take $\kappa=\alpha$ and $\omega=(3,1)$.
It is easy to see that $c^\lambda_{\kappa,\omega}\neq 0$ (see Fig.~\ref{f:2-2}).
By Theorem~\ref{th:1}, we find that $p^{(4)}_{(3,1)}(x)=(x^4-1)/(x-1)$.
As above, we see that $m-\alpha_1=m-\nu_1-1\geqslant k-1$
and $m-\alpha_1'\geqslant m-\nu_1'\geqslant k-1$, so by induction
$\Sp_\alpha(\delta)=\{\eta\in\mathbb{C}~|~\eta^{o(\tau)}=1\}$.
This implies that the pair $(\kappa,\omega)$ satisfies the required conditions.

{\bf Subcase 2.6}. Suppose that $\gamma=(2,2,2)$ or $\gamma=(3,3)$.
Then $d_1=6$.

Suppose that $|\mathcal{LR}([\lambda/\nu])|>1$.
By previous cases and $(\ref{eq:2})$,
we find that $c^\lambda_{\nu,\omega}\neq0$,
where $\omega\neq\gamma$ and $\omega\in\{(2,2,2),(3,3)\}$.
By Theorem~\ref{th:1}, we find that $\Sp_\gamma((6))\cup\Sp_\omega((6))=\{\eta\in\mathbb{C}~|~\eta^6=1\}$.
Therefore, in this case we can use this $\omega$ and $\kappa=\nu$.

Suppose that $|\mathcal{LR}(\lambda/\nu)|=1$.
By Lemma~\ref{l:unique}, we find that 
$[\lambda/\mu]\cong[\gamma]$.
Then we take $\kappa=\alpha$. Put $\omega=(4,2)$ if $\gamma=(3,3)$
and $\omega=(3,2,1)$ if $\gamma=(2,2,2)$.
Then $c^\lambda_{\nu,\omega}\neq0$
and the pair $(\kappa,\omega)$ is as required (see Fig.~\ref{f:2-2}). 
\begin{figure}$
\ytableausetup{boxsize=1.2em}
\begin{ytableau}
\bullet & \none[\ldots] & \bullet & \bullet & 1 \\
\bullet & \none[\ldots] & \bullet & 1 & 2 \\
 \none[\ldots] &  \none[\ldots] & \none[\ldots]  \\
\bullet & \none[\ldots] & 1  \\
\none[\ldots] & \none[\ldots] 
\end{ytableau}\quad
\begin{ytableau}
\bullet & \none[\ldots] & \bullet & \bullet & 1  & 1 \\
\bullet & \none[\ldots] & \bullet & 1 & 2 & 2 \\
 \none[\ldots] &  \none[\ldots] & \none[\ldots]  \\
\bullet & \none[\ldots] & 1 \\
\none[\ldots] & \none[\ldots]
\end{ytableau}
\quad\begin{ytableau}
\bullet & \none[\ldots] & \bullet & \bullet  & 1 \\
\bullet & \none[\ldots] & \bullet & 1 & 2 \\
\bullet & \none[\ldots] & \bullet & 2 & 3 \\
\none[\ldots] &  \none[\ldots] & \none[\ldots]  \\
\bullet & \none[\ldots] & 1 \\
\none[\ldots] & \none[\ldots] 
\end{ytableau}
$\caption{Cases $\gamma\in\{(2,2),(2,2,2),(3,3)\}$}\label{f:2-2}
\end{figure}
\end{proof}

\section{Proof of Theorem~\ref{t:main2}}
\begin{proof}[Proof of Theorem~\ref{t:main2}]
We prove the statement by induction on $n$.
Suppose that either $\lambda\neq\lambda'$
or $\lambda=\lambda'$ and $\mu\neq(a_1,\ldots,a_l)$,
where $a_i$ are the lengths of hooks along the main diagonal of $[\lambda]$. The restrictions on $\sigma$ and $\lambda$ imply that for all $i\geqslant0$ we have $\chi(\sigma^i)=\chi^\lambda(\sigma^i)/d$, where $d\in\{1,2\}$. From the formula for the dimension of the eigenspaces of $\rho(\sigma)$, which was discussed at the beginning of the proof of Theorem~\ref{t:main}, it follows that $\sigma$ has the same set of eigenvalues for $\rho$ and for the representation of $S_n$ affording $\chi^\lambda$. Thus, the assertion follows from Theorem~\ref{t:main}.

Suppose now that $\lambda=\lambda'$ and $\mu=(a_1,\ldots,a_l)$.
If $n\leqslant 13$, then we verify the assertion using GAP~\cite{GAP}.
So we can assume that $n\geqslant 14$.
Note that $\lambda_1=\frac{a_1+1}{2}$ and hence 
$n-\lambda_1=n-\lambda_1'=\frac{a_1-1}{2}+a_2+\ldots+a_l\geqslant 1+1+\ldots+1=l$. Therefore, we need to show that $p(x)=x^{o(\sigma)}-1$ in this case.

If $l=1$, then $p(x)=x^{o(\sigma)}-1$ by \cite[Corollary~1.2]{YS21} (see also \cite[Theorem~1.2]{V25}).
So we can assume that $l\geqslant 2$.

Recall that $\chi^\lambda\downarrow^{S_n}_{A_n}=\chi^+_\lambda+\chi^-_\lambda$, where, if $\pi\in A_n$
is not conjugate to $\sigma$ and $\sigma^{(1,2)}$, then $\chi^+_\lambda(\pi)=\chi^-_\lambda(\pi)=\chi^\lambda(\pi)/2$.
Denote $m=n-a_l$ and $k=a_l$.
Since $n\geqslant 14$, we infer that $m\geqslant 9$.

Consider $\sigma$ as an element of $A_m\times A_k$
and write $\sigma=\pi\tau$, where $\pi\in A_m$, $\tau\in A_k$.
Now we find the decomposition of $\chi^\lambda\downarrow^{S_n}_{A_{m}}$ into a sum of irreducible characters of $A_m$ in two ways. 

Using transitivity, we consider the restriction from $S_n$ to $A_{m}$ through the following chain of subgroups:
$A_{m}\leq S_{m}\leq S_{m+1}\leq\ldots\leq S_{n-1}\leq S_n$.
By the Branching law, we see that 
$\chi^\lambda\downarrow^{S_n}_{A_{m}}$ is a sum of $\chi^{\nu}\downarrow^{S_{m}}_{A_{m}}$ with some nonzero multiplicities, where $\nu\vdash m$ such that $\nu\subseteq\lambda$ (this also follows from the Littlewood--Richardson rule). Since $m-1\geqslant a_1-1>\frac{a_1+1}{2}=\lambda_1=\lambda_1'$, we see that $\nu\not\in\{(m), (1^{(m)}, (m-1,1), (2,1^{m-2})\}$. On the other hand, we find that
\begin{equation}\label{eq:3}
\chi^\lambda\downarrow^{S_n}_{A_m\times A_k}=\chi^+_\lambda\downarrow^{A_n}_{A_m\times A_k}+\chi^-_\lambda\downarrow^{A_n}_{A_m\times A_k}=\sum_{i\in I}\psi^+_i\phi_i^++\sum_{j\in J}\psi^-_j\phi_j^-,
\end{equation}
where $\psi^+_i,\psi^-_j\in\Irr(A_m)$, $\phi^+_i,\phi^-_j\in \Irr(A_k)$. Therefore, 
$\chi^\lambda\downarrow^{S_n}_{A_m}=\sum_i\phi_i^+(1)\cdot \psi^+_i+\sum_j\phi_j^-(1)\cdot\psi^-_j$. 

Comparing two decompositions for $\chi^\lambda\downarrow^{S_n}_{A_m}$, we infer that $\psi^+_i$ and $\psi^-_j$ do not correspond to partitions $(m)$, $(1^{m})$, $(m-1,1)$, and $(2,1^{m-2})$. By induction, 
the minimal polynomial of the image of $\pi$
in representations affording characters $\psi^+_i$ and $\psi^-_j$ equals $x^{o(\pi)}-1$. 

If $\psi\in\Irr(A_k)$, then denote the set of eigenvalues of the image of $\tau$ in a representation affording the character $\psi$ by $\Sp_\psi(\tau)$.
To prove the theorem, 
it suffices to show that 
$$\bigcup\limits_{i\in I}\Sp_{\phi_i^+}(\tau)=\bigcup\limits_{j\in J} \Sp_{\phi_j^-}(\tau)=\{\eta\in\mathbb{C}~|~\eta^{k}=1\}.$$

Note that if $g\in A_k$, then $\chi^+_\lambda(g)=\chi^\lambda(g)/2=\chi^-_\lambda(g)$. Using the equality~(\ref{eq:3}),
we find that $\{\phi_i^+\}_{i\in I}
=\{\phi_j^-\}_{j\in J}$. 
Therefore, we may consider only the set
$\{\phi_i^+\}$. As above, we see that $\phi_i^{+}$ 
are restrictions to $A_k$ of all $\chi^\nu\in\Irr(S_k)$,
where $\nu\vdash k$ and $\nu\subseteq\lambda$.
If $k\geqslant 5$, then we see that $\nu=(\frac{k+1}{2},\frac{k-1}{2})$ satisfies these conditions. Therefore, there exists $i_1\in I$
such that $\phi_{i_1}^+=\chi^{\nu}\downarrow^{S_k}_{A_k}$.
By induction, we know that $\Sp_{\phi_{i_1}^+}(\tau)=\{\eta\in\mathbb{C}~|~\eta^{k}=1\}$, as required.

If $k=1$, then there is nothing to prove.
It remains to consider the case  $k=3$. 
Then $A_k$ is the cyclic group of order 3.
Note that $\Irr(A_k)$ consists of three irreducible characters:
$\chi_{(3)}$, $\chi_{(2,1)}^+$, and $\chi_{(2,1)}^-$.
It is clear that $\chi_{(3)}(\tau)=1$, $\chi_{(2,1)}^+(\tau)=\omega$, and $\chi_{(2,1)}^-(\tau)=\omega^2$, where $\omega\in\mathbb{C}$ such that $1+\omega+\omega^2=0$. 
Then $\chi^\lambda(\tau)=a\chi_{(3)}(\tau)+b\chi_{(2,1)}^+(\tau)+c\chi_{(2,1)}^-(\tau)$, where $a,b,c\geqslant 0$.
We know that $(3)\subseteq\lambda$ and
$(2,1)\subseteq\lambda$, so $a>0$ and 
$b$ or $c$ is nonzero. Since $\chi^\lambda(\tau)\in\mathbb{Z}$,
we infer that both $b$ and $c$ are positive.
Therefore, there exist $i_1,i_2,i_3\in I$
such that $\phi_{i_1}=\chi_{(3)}$,
$\phi_{i_2}=\chi_{(2,1)}^+$, and 
$\phi_{i_3}=\chi_{(2,1)}^-$.
Since 
$$\Sp_{\phi_{i_1}}(\tau)\cup\Sp_{\phi_{i_2}}(\tau)\cup\Sp_{\phi_{i_3}}(\tau)=\{\eta\in\mathbb{C}~|~\eta^3=1\},$$
the assertion is true in this case.
This completes the proof of the theorem.
\end{proof}

\section{Acknowledgments}
We thank Amrutha P.,  A. Prasad, and Velmurugan S. for their recent works which have inspired and renewed interest in this topic. We also thank A. Prasad and Velmurugan S. for their comments on a first draft of this article, particularly in the case of alternating groups.

\Address

\end{document}